\theoremstyle{plain}
\newtheorem{theorem}{Theorem}[section]{\bfseries}{\itshape}
\newtheorem{proposition}[theorem]{Proposition}{\bfseries}{\itshape}
\newtheorem{definition}[theorem]{Definition}{\bfseries}{\upshape}
\newtheorem{lemma}[theorem]{Lemma}{\bfseries}{\upshape}
\newtheorem{example}[theorem]{Example}{\bfseries}{\upshape}
\newtheorem{corollary}[theorem]{Corollary}{\bfseries}{\upshape}
\newtheorem{remark}[theorem]{Remark}{\bfseries}{\upshape}
\newcommand{\bw}{\bigwedge}
\newcommand{\bv}{\bigvee}
\newcommand{\gA}{\mathfrak{A}}
\newcommand{\sL}{\mathscr{L}}
\newcommand{\sC}{\mathscr{C}}
\newcommand{\pu}{\mathit{p}^\uparrow}
\newcommand{\abf}{(\alpha,\beta)\text{-filter}}
\newcommand{\abr}{(\alpha,\beta)\text{-representable}}
\newcommand{\abe}{(\alpha,\beta)\text{-embedding}}
\newcommand{\cF}{\mathcal{F}}
\newcommand{\bbA}{\mathbb{A}}
\newcommand{\bbS}{\mathbb{S}}
\newcommand{\fin}{\mathbf{small}}
\newcommand{\esS}{\mathit{s}^{\bbS}}
\newcommand{\etS}{\mathit{t}^{\bbS}}
\newcommand{\euS}{\mathit{u}^{\bbS}}
\newcommand{\eaA}{\mathit{a}^{\bbA}}
\newcommand{\ebA}{\mathit{b}^{\bbA}}
\newcommand{\ecA}{\mathit{c}^{\bbA}}
\newcommand{\edA}{\mathit{d}^{\bbA}}
\newcommand{\lb}{\text{\textbf{glb}}}
\newcommand{\ub}{\text{\textbf{lub}}}
\newcommand{\boU}{\text{\textbf{U}}}
\newcommand{\boA}{\text{\textbf{A}}}
\newcommand{\bfC}{\text{\textbf{C}}}
\title{Representable posets}
\author{Rob Egrot}
\date{}
\begin{document}
\maketitle

\begin{abstract}
A poset is representable if it can be embedded in a field of sets in such a way that existing finite meets and joins become intersections and unions respectively (we say finite meets and joins are preserved). More generally, for cardinals $\alpha$ and $\beta$ a poset is said to be $(\alpha,\beta)$-representable if an embedding into a field of sets exists that preserves meets of sets smaller than $\alpha$ and joins of sets smaller than $\beta$. We show using an ultraproduct/ultraroot argument that when $2\leq\alpha,\beta\leq \omega$ the class of $(\alpha,\beta)$-representable posets is elementary, but does not have a finite axiomatization in the case where either $\alpha$ or $\beta=\omega$. We also show that the classes of posets with representations preserving either countable or \emph{all} meets and joins are pseudoelementary. 
\let\thefootnote\relax\footnotetext{Author: Rob Egrot, Faculty of ICT, Mahidol University,
999 Phuttamonthon 4 Road,
Salaya,
Nakhon Pathom 73170,
Thailand,
email: robert.egr@mahidol.ac.th
}\\
Keywords: poset, partially ordered set, representation, axiomatization, elementary class
\end{abstract}

\begin{section}{Introduction}
Every poset $P$ can be thought of as a set of sets ordered by inclusion by considering the embedding $P\to \wp(P)$ defined by $p\mapsto \{q\in P:q\leq p\}$. Indeed, this representation has the advantage of preserving all existing meets, finite and infinite. Similarly the representation defined by $p\mapsto \{q\in P:q\not\geq p\}$ preserves existing joins \cite{Abi68}. It follows from this that semilattices can be represented in such a way that their binary operation is modelled by union or intersection appropriately, though in general we cannot construct representations where both existing joins and meets are interpreted as unions and intersections
respectively. 

Distributivity, or the lack of it, is the issue here, as might be expected given that fields of sets are distributive. In the case of Boolean algebras, which are necessarily distributive, representability follows as an easy corollary of Stone's theorem \cite{Stone37}. For lattices distributivity alone is a necessary and sufficient condition for representability.

\begin{definition}[Lattice representation]
Let $L$ be a bounded lattice. A \emph{representation} of $L$ is a bounded lattice embedding $h\colon L\to\wp(X)$ for some set $X$, where $\wp(X)$ is considered as a field of sets, under the operations of set union and intersection. When such a representation exists we say that $L$ is \emph{representable}.
\end{definition}

\begin{theorem}[Birkhoff]\label{T;PrimeIdeal}
Let $L$ be a distributive lattice, let $F$ be a filter of $L$, and let $I$ be an ideal of $L$ with $F\cap I=\emptyset$. Then there is a prime filter $F'\subset L$ with $F\subseteq F'$ and $I\cap F'=\emptyset$.
\end{theorem}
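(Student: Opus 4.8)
The plan is to produce $F'$ via Zorn's lemma as a filter maximal among those extending $F$ and missing $I$, and then to show that maximality together with distributivity forces primeness. This is the classical route, and the only genuinely interesting step is the final recombination via distributivity.

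First I would set up the Zorn's lemma argument. Let $\mathcal{P}$ be the set of all filters $G$ of $L$ with $F\subseteq G$ and $G\cap I=\emptyset$, ordered by inclusion. This is nonempty since $F\in\mathcal{P}$. Given a chain $\mathcal{C}\subseteq\mathcal{P}$, its union $\bigcup\mathcal{C}$ is again a filter: it is clearly upward closed, and if $x,y\in\bigcup\mathcal{C}$ then both lie in a common member of the chain (as $\mathcal{C}$ is totally ordered), so $x\wedge y$ does too. The union is disjoint from $I$ because each member is, so it is an upper bound for $\mathcal{C}$ inside $\mathcal{P}$. Zorn's lemma then yields a maximal element $F'\in\mathcal{P}$. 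Note $F'$ is proper whenever $I\neq\emptyset$, since $L\cap I=I\neq\emptyset$; the case $I=\emptyset$ can be handled separately (or reduced to the bounded case by taking $I=\{0\}$).

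Next I would show $F'$ is prime. Suppose for contradiction that $a\vee b\in F'$ but $a\notin F'$ and $b\notin F'$. The filter generated by $F'\cup\{a\}$ is $\{x:\exists f\in F',\ f\wedge a\leq x\}$, and since it properly contains $F'$, maximality of $F'$ forces it to meet $I$. Hence there is $f_1\in F'$ with $f_1\wedge a\in I$ (using that $I$ is a downset to absorb the witnessing inequality). Symmetrically there is $f_2\in F'$ with $f_2\wedge b\in I$.

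Finally I would recombine these witnesses. Put $f=f_1\wedge f_2\in F'$. Then $f\wedge a\leq f_1\wedge a\in I$ and $f\wedge b\leq f_2\wedge b\in I$, so both lie in $I$ as $I$ is downward closed. By distributivity $f\wedge(a\vee b)=(f\wedge a)\vee(f\wedge b)$, which lies in $I$ because ideals are closed under finite joins. But $f\in F'$ and $a\vee b\in F'$, so $f\wedge(a\vee b)\in F'$, contradicting $F'\cap I=\emptyset$. Therefore $F'$ is prime. The step I expect to be the crux is this last one: everything hinges on recognising that maximality supplies exactly one witness on each side whose meet can be combined, and that distributivity is precisely what converts the two separate memberships in $I$ into a single one; without distributivity this recombination fails, which is why the hypothesis is essential.
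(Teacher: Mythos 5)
Your proof is correct: the paper states this result as Birkhoff's classical theorem and gives no proof of its own, so there is nothing to diverge from, and your argument---Zorn's lemma applied to filters extending $F$ and avoiding $I$, followed by the recombination $f=f_1\wedge f_2$ with distributivity turning $(f\wedge a)\vee(f\wedge b)\in I$ into a contradiction---is exactly the standard proof. The only point worth flagging is the degenerate case $I=\emptyset$, which you note but dismiss quickly; under the usual convention that ideals are nonempty this never arises, and your main argument needs no repair.
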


As a corollary to this we have the following result.

\begin{theorem}\label{T;Lrep}
A bounded lattice is representable if and only if it is distributive.
\end{theorem}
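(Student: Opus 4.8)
The plan is to prove Theorem~\ref{T;Lrep} as a corollary of Birkhoff's prime filter theorem (Theorem~\ref{T;PrimeIdeal}), so the main task reduces to the nontrivial direction: every distributive bounded lattice admits a representation. The reverse implication is routine, so let me sketch it first and then concentrate on the construction.

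For the easy direction, suppose $L$ is representable, say via a bounded lattice embedding $h\colon L\to\wp(X)$. Since $\wp(X)$ under union and intersection is a distributive lattice, and $h$ is an injective homomorphism, $L$ is isomorphic to a sublattice of a distributive lattice, hence distributive. (Distributivity is inherited by sublattices because it is an equational condition preserved by the embedding: $h$ carries any failure of distributivity in $L$ to a failure in $\wp(X)$, which cannot occur.)

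For the hard direction, assume $L$ is distributive and bounded. The plan is to take $X$ to be the set of all prime filters of $L$, and to define $h\colon L\to\wp(X)$ by
\begin{equation*}
h(a)=\{F'\in X : a\in F'\}.
\end{equation*}
First I would check that $h$ is a bounded lattice homomorphism: the conditions $a\in F'$ being closed upward and under finite meets (filter axioms) together with primeness (if $a\vee b\in F'$ then $a\in F'$ or $b\in F'$) give exactly $h(a\wedge b)=h(a)\cap h(b)$ and $h(a\vee b)=h(a)\cup h(b)$; the top element lies in every prime filter and the bottom in none, so $h(\top)=X$ and $h(\bot)=\emptyset$. The crux is injectivity, equivalently order-reflection: I must show that if $a\not\leq b$ then there is a prime filter containing $a$ but not $b$. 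Here I would invoke Theorem~\ref{T;PrimeIdeal} with the principal filter $F=\{x:x\geq a\}$ and the principal ideal $I=\{x:x\leq b\}$. When $a\not\leq b$ these satisfy $F\cap I=\emptyset$ (any common element would force $a\leq b$), so Birkhoff yields a prime filter $F'\supseteq F$ disjoint from $I$; then $a\in F'$ while $b\notin F'$, giving $h(a)\not\subseteq h(b)$.

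The main obstacle, and the only step requiring genuine care rather than bookkeeping, is verifying that $F=\mathord{\uparrow}a$ and $I=\mathord{\downarrow}b$ are genuinely a filter and an ideal with empty intersection precisely when $a\not\leq b$, and confirming that the hypotheses of Theorem~\ref{T;PrimeIdeal} are met so the separation applies. Once separation is in hand, injectivity follows because $a\neq b$ forces $a\not\leq b$ or $b\not\leq a$, and each case produces a prime filter separating them; thus $h$ is an embedding and $L$ is representable. The remaining homomorphism verifications are direct consequences of the filter and primeness axioms and need no further argument.
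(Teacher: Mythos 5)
Your proposal is correct and follows essentially the same route as the paper: the paper's proof also takes $X$ to be the set of prime filters of $L$, defines $h(a)=\{f : a\in f\}$, and invokes Theorem~\ref{T;PrimeIdeal} for the separation, dismissing the converse as trivial. You have merely filled in the details (the homomorphism checks via the filter and primeness axioms, and the choice $F=\mathord{\uparrow}a$, $I=\mathord{\downarrow}b$ when $a\not\leq b$) that the paper leaves to the reader.
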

\begin{proof}
If $L$ is a distributive lattice and $K$ is the set of prime filters of $L$ it's not difficult to show that the map $h\colon P \to \wp(K)$ defined by $h(a)=\{f\in K:a\in f\}$ is a representation using theorem \ref{T;PrimeIdeal}. The converse is trivial. 
\end{proof}

In the meet-semilattice case we have representability for existing finite joins if and only if an infinite family of first order axioms demanding that whenever $a\wedge(b_1\vee...\vee b_n)$ is defined, then $(a\wedge b_1)\vee...\vee (a\wedge b_n)$ is also defined and the two are equal is satisfied \cite{Bal69,Sch72}. In the lattice case of course only a single distributivity equation is both necessary and sufficient. It is possible that in the meet-semilattice case meets could distribute over some finite cardinalities of existing joins but not others, which would necessitate a family of axioms to axiomatize representability. Schein \cite{Sch72} asserts that this is the case, and indeed that an infinite family is required, but there is some reason to doubt that this has been established as a fact\footnote{Post-publication note: Actually this was conclusively resolved by \cite{Kea97}, where it is shown that the class of representable semilattices is not finitely axiomatizable}. We expand on this in the discussion following corollary \ref{C;compRep}. 

It is tempting to try to apply the meet-semilattice axiom schema directly to posets. There is a problem however, as the classical Zorn's lemma based argument for semilattices and lattices does not work in the poset case. A step in this argument assumes the existence of finite meets, which is invalid in the more general setting of posets. This is a symptom of a deeper problem, as a simple generalization of the axiom schema for semilattices (such as appears as the condition LMD in \cite{CheKem92}, see definition \ref{D;LMD} below) is not expressive enough for the poset situation (see example \ref{E;express}, where it is shown that LMD is not a necessary condition for a poset to have a representation). Note that the question of whether LMD is a \emph{sufficient} condition for representability with respect to all meets and binary joins is raised as open in \cite{CheKem92}, and appears not to have been resolved.

\begin{definition}[LMD]\label{D;LMD}
A poset $P$ is $LMD$ if and only if, for all $x,y,z\in P$, whenever $x\wedge (y\vee z)$ is defined then $(x\wedge y)\vee (x\wedge z)$ is also defined and they are equal.
\end{definition}

\begin{example}[The condition $LMD$ is not necessary for representability]\label{E;express} Let $P$ be the poset 
\[\xymatrix{& & q\vee r\ar@{-}[dr]\ar@{-}[d]\ar@{-}[ddl] \\
p\ar@{-}[rd] & & q & r \\
& p\wedge(q\vee r)
}\] 
Then $p\wedge q$, $p\wedge r$, and $(p\wedge q)\vee (p\wedge r)$ do not exist in $P$ but $P$ is representable. For example using the field of sets generated by $\{a,c,d\},\{b,d\},\{b,c\}$, where $p\mapsto \{a,c,d\}$, $q\mapsto \{b,d\}$ and $r\mapsto \{b,c\}$.
\end{example}

Suppose we were to weaken $LMD$ so that equality is only necessary in the case where both sides are defined (see definition \ref{D;D2} below)? It turns out that an axiom schema along these lines is not sufficient to ensure representability. We will demonstrate this in example \ref{E;DnotRep} later. 
\begin{definition}[$\bar{D}$2]\label{D;D2}
We say a poset $P$ satisfies condition $\bar{D}2$ if and only if for all $x,y,z\in P$, whenever $x\wedge(y\vee z)$ and $(x\wedge y)\vee (x\wedge z)$ are defined then they are equal.
\end{definition}

Despite the failure of the most natural candidates, it is nevertheless the case that the class of representable posets is elementary, though an infinite number of axioms are needed. In section \ref{S;reps} we set down a general framework for representations of posets and prove a characterization theorem (theorem \ref{T;rep}). In section \ref{S;ultras} we provide some well known definitions and results from model theory which we shall use in later sections. In section \ref{S;elem} we use this characterization in a closure under ultraproducts/ultraroots argument to demonstrate that the class of representable posets is elementary. We also show that an infinite number of axioms is necessary. Finally in section \ref{S;comp} we show that the notion of complete representability introduced in section \ref{S;reps} is pseudoelementary, and describe some situations where it is known not be elementary.

\end{section}

\begin{section}{Representations for posets}\label{S;reps}
In the following definitions we assume that $\alpha$ and $\beta$ are cardinals with $2<\alpha,\beta$.

We observe the following notational convention. For any subset $S$ of a poset $P$ we write $S^\uparrow$ for $\{p\in P: \exists s\in S,\; s\leq p\}$. For $p\in P$ we write $\pu$ as shorthand for $\{p\}^\uparrow$ when the meaning is not ambiguous.

\begin{definition}[$(\alpha,\beta)$-morphism]\label{D;mapdefs}
Given posets $P_1$ and $P_2$ we say a map $f\colon P_1\to P_2$ is an $(\alpha,\beta)$-morphism if $f(\bw_I p_i)=\bw_I f(p_i)$ whenever $|I|<\alpha$ and $\bw_I p_i$ is defined, and $f(\bv_I p_i)=\bv_I f(p_i)$ whenever $|I|<\beta$ and $\bv_I p_i$ is defined. If $f$ is also an order embedding we say it is an $(\alpha,\beta)$-embedding (note that $f$ will always be order preserving).

If for some $\alpha$ a morphism $f$ is an $(\alpha,\beta)$-morphism for all $\beta$ we say it is an $(\alpha,C)$-morphism, and similarly if for some $\beta$ it is an $(\alpha,\beta)$-morphism for all $\alpha$ we say it is a $(C,\beta)$-morphism. If $f$ is an $(\alpha,\beta)$-morphism for all $\alpha,\beta$ then we say it is a $C$-morphism, or a complete morphism. We make the appropriate definitions for complete embeddings etc. When $\alpha=\beta$ we just say e.g. $\alpha$-morphism.
\end{definition}

\begin{definition}[$(\alpha,\beta)$-representation]\label{D;rep}
An $(\alpha,\beta)$-representation of a poset $P$ is an $(\alpha,\beta)$-embedding $h\colon P \to \wp(X)$ for some set $X$ where $\wp(X)$ is considered as a field of sets. When $P$ has a top and/or bottom, we demand that $h$ maps them to $X$ and/or $\emptyset$ respectively. When $P$ has an $(\omega,\omega)$-representation we omit the modifier and simply say it has a \emph{representation}, or that it is \emph{representable}. Similar definitions are made for complete representations etc. using definition \ref{D;mapdefs} above. When $\alpha=\beta$ we just say e.g. $\alpha$-representation.
\end{definition}

The concept of representability is equivalent to a separation property. We make this precise in theorem \ref{T;rep}, though first we require some preliminary definitions.

\begin{definition}[$\abf$]\label{D;wfilter}
$S\subseteq P$ is an $\abf$ of $P$ if it is closed upwards and for all $\emptyset\subset X\subseteq S$ with $|X|<\alpha$ we have $\bw X\in S$ whenever $\bw X$ is defined, and whenever $\emptyset\subset Y\subseteq P$ and $|Y|<\beta$ with $\bv Y$ defined in $P$ we have $\bv Y\in S\implies y\in S$ for some $y\in Y$. $(C,C)$-filters etc. are defined similarly. When $\alpha=\beta$ we just say e.g. $\alpha$-filter.
\end{definition}  

We define \emph{$(\alpha,\beta)$-ideals} dually as being the sets that are $\abf$s in the order dual $P^\delta$. Note that in a lattice the $\abf$s and $(\alpha,\beta)$-ideals are precisely the prime filters and prime ideals (for all $2<\alpha,\beta\leq \omega$).

\begin{proposition}
If $F$ is an $\abf$ of $P$ then $F'=P\setminus F$ is a $(\beta,\alpha)$-ideal of $P$.
\end{proposition}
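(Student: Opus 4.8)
The plan is to verify directly that $F'=P\setminus F$ satisfies the three defining conditions of a $(\beta,\alpha)$-ideal, each obtained by contraposition from the corresponding condition for the $\abf$ $F$. First I would unwind what it means for $F'$ to be a $(\beta,\alpha)$-ideal. By definition this says that $F'$ is a $(\beta,\alpha)$-filter in the order dual $P^\delta$; translating back to $P$, where the roles of meets and joins and of up- and down-closure are interchanged and the cardinal bounds are swapped, this amounts to three requirements: (i) $F'$ is closed downwards; (ii) for every nonempty $X\subseteq F'$ with $|X|<\beta$ such that $\bv X$ is defined we have $\bv X\in F'$; and (iii) for every nonempty $Y\subseteq P$ with $|Y|<\alpha$ such that $\bw Y$ is defined, $\bw Y\in F'$ implies $y\in F'$ for some $y\in Y$.

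Then I would establish each of these by assuming the negation of the conclusion and deriving membership in $F$, contradicting membership in $F'$. For (i), if $a\in F'$ and $b\leq a$ but $b\notin F'$, then $b\in F$, and upward closure of $F$ forces $a\in F$, a contradiction. For (ii), if $\bv X\notin F'$ then $\bv X\in F$; since $|X|<\beta$ and $\bv X$ is defined, the join-primeness clause of the $\abf$ definition yields some $x\in X\cap F$, contradicting $X\subseteq F'$. For (iii), if no $y\in Y$ lies in $F'$ then $Y\subseteq F$; since $|Y|<\alpha$ and $\bw Y$ is defined, the meet-closure clause of the $\abf$ definition gives $\bw Y\in F$, contradicting $\bw Y\in F'$.

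The work here is entirely routine once the dualization is set up correctly, so the only point requiring care is the bookkeeping in the definition of a $(\beta,\alpha)$-ideal: one must track both the interchange of meets with joins and the swap of the cardinal parameters $\alpha$ and $\beta$ when passing to $P^\delta$, and then match each resulting clause to exactly the right clause in the definition of an $\abf$. In particular, the join-closure requirement (ii) with bound $\beta$ is paired with the join-primeness clause of $F$ (which also carries bound $\beta$), while the meet-primeness requirement (iii) with bound $\alpha$ is paired with the meet-closure clause of $F$ (bound $\alpha$); it would be easy to misalign these if one were not careful. No appeal to completeness or to any separation or representation result is needed, as the statement is purely a consequence of how set-theoretic complementation interacts with the order-dual definitions.
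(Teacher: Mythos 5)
Your proof is correct and follows essentially the same route as the paper's: both verify the three dual filter conditions by contraposition, matching the down-closure, join-closure (bound $\beta$), and meet-primeness (bound $\alpha$) clauses for $F'$ against the up-closure, join-primeness, and meet-closure clauses of the $\abf$ $F$. The only cosmetic difference is that the paper phrases the verification inside $P^\delta$ while you translate the conditions back into $P$ first; the underlying argument is identical.
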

\begin{proof}
We must show that $F'$ is a $(\beta,\alpha)$-filter of $P^\delta$. Clearly $F'$ is up-closed in $P^\delta$ so let $X\subseteq F'$ with $|X|<\beta$ and suppose $\bw X$ exists in $P^\delta$. Then if $\bw X\notin F'$ (in $P^\delta$) then $\bv X\in F$ (in $P$), and so we must have $x\in X\cap F$, which would be a contradiction. Similarly, suppose $Y\subset P^\delta$ with $|Y|<\alpha$ and that $\bv Y$ exists in $P^\delta$. Then if for all $y\in Y$ we have $y\notin F'$ then we must have $Y\subseteq F$. But then $\bw Y\in F$ (in $P$), and so $\bv Y \notin F'$ (in $P^\delta$). So $F'$ is a $(\beta,\alpha)$-filter of $P^\delta$ as required. 
\end{proof}

\begin{corollary}\label{C;FI}
$F$ is an $\abf$ of $P$ $\iff F'$ is a $(\beta,\alpha)$-ideal of $P$
\end{corollary}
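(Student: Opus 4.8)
The forward implication is exactly the preceding proposition, so the only content to establish is the converse: that if $F'=P\setminus F$ is a $(\beta,\alpha)$-ideal of $P$ then $F$ is an $\abf$ of $P$. The plan is to obtain this essentially for free by applying the proposition to the order dual, rather than reproving the filter axioms by hand.

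Recall that, by definition, a $(\beta,\alpha)$-ideal of $P$ is precisely a $(\beta,\alpha)$-filter of the order dual $P^\delta$, and that $(P^\delta)^\delta=P$ with the underlying set unchanged. So suppose $F'$ is a $(\beta,\alpha)$-ideal of $P$; equivalently $F'$ is a $(\beta,\alpha)$-filter of $P^\delta$. Since the proposition holds for every poset and every admissible pair of cardinals, I would apply it to the poset $P^\delta$ with the cardinals $\beta$ and $\alpha$ playing the roles of the first and second parameters respectively. This immediately yields that $P^\delta\setminus F'$ is an $(\alpha,\beta)$-ideal of $P^\delta$.

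It then remains only to unwind the two dualizations. As sets we have $P^\delta\setminus F'=P\setminus F'=F$, and an $(\alpha,\beta)$-ideal of $P^\delta$ is by definition an $\abf$ of $(P^\delta)^\delta=P$. Hence $F$ is an $\abf$ of $P$, which completes the converse and establishes the biconditional.

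The argument is essentially bookkeeping, so I do not anticipate a genuine obstacle. The only points requiring care are keeping the cardinal parameters in the correct (swapped) order when invoking the proposition on $P^\delta$, and confirming that the complement and double-dual operations interact as claimed, so that $P^\delta\setminus F'$ really is the original set $F$ and so that an ideal of $P^\delta$ unwinds to a filter of $P$.
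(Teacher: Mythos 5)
Your proof is correct and follows essentially the same route as the paper: the forward direction is the preceding proposition, and the converse is obtained by applying that proposition to the order dual $P^\delta$ with the cardinal parameters swapped, then unwinding the definitions. The paper compresses this into the phrase ``the backward direction follows from the dual statement,'' and your careful bookkeeping (that $P^\delta\setminus F'=F$ and that an $(\alpha,\beta)$-ideal of $P^\delta$ is an $\abf$ of $P$) is exactly what that phrase abbreviates.
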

\begin{proof}
The forward direction is the preceding proposition, and the backward direction follows from the dual statement.
\end{proof}

\begin{definition}[Separating]
$S\subseteq \wp(P)$ is separating over $P$ if whenever $p\not\leq q\in P$ there is $X\in S$ with $p\in X$ and $q\notin X$. We say $S$ is \emph{dually-separating} over $P$ if whenever $p\not\leq q$ there is $X\in S$ with $q\in X$ and $p\notin X$.
\end{definition}

\begin{theorem}\label{T;rep}
For a poset $P$, and for all cardinals $2<\alpha,\beta$ the following are equivalent:
\begin{enumerate}
\item The $\abf$s of $P$ are separating over $P$,
\item $P$ is $(\alpha,\beta)$-representable,
\end{enumerate}
\end{theorem}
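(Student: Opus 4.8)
The plan is to prove the two implications separately, in each direction by constructing the required object explicitly; the construction is a poset analogue of the prime-filter representation used for Theorem~\ref{T;Lrep}.

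For $2\Rightarrow 1$, suppose $h\colon P\to\wp(X)$ is an $(\alpha,\beta)$-representation and take $p\not\leq q$. Since $h$ is an order embedding, $h(p)\not\subseteq h(q)$, so I can fix a point $x\in h(p)\setminus h(q)$ and set $F_x=\{r\in P: x\in h(r)\}$. I would then check that $F_x$ is an $\abf$: upward closure is immediate because $h$ is order preserving; closure under existing meets of size ${<}\alpha$ holds because $h(\bw X')=\bigcap_{r\in X'}h(r)$, so $x$ lies in every $h(r)$ exactly when it lies in $h(\bw X')$; and the prime clause for existing joins of size ${<}\beta$ holds because $h(\bv Y)=\bigcup_{y\in Y}h(y)$, so $x\in h(\bv Y)$ forces $x\in h(y)$ for some $y\in Y$. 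By construction $p\in F_x$ and $q\notin F_x$, which gives separation.

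For $1\Rightarrow 2$, I would take $K$ to be the set of $\abf$s of $P$ and define $h\colon P\to\wp(K)$ by $h(p)=\{F\in K: p\in F\}$. Order preservation follows from upward closure and order reflection is precisely the separation hypothesis, so $h$ is an $\abe$ once meets and joins are handled. For an existing meet $m=\bw X$ with $|X|<\alpha$, the inclusion $h(m)\subseteq\bigcap_{x\in X}h(x)$ is just monotonicity, and the reverse inclusion is exactly the meet-closure clause of the filter definition; dually, for an existing join $j=\bv Y$ with $|Y|<\beta$, one inclusion is monotonicity and the other is the prime clause.

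I expect the boundary conditions on top and bottom to be the only real obstacle, since the representation definition demands $h(\top)=K$ and $h(\bot)=\emptyset$, whereas the naive index set $K$ does not deliver this: both $\emptyset$ and $P$ are themselves $\abf$s, the empty filter omits $\top$, and $P$ contains $\bot$. The fix is to observe that, by upward closure, the only $\abf$ omitting a top element is $\emptyset$ and the only one containing a bottom element is $P$, so restricting $K$ to the nonempty proper filters forces $h(\top)=K$ and $h(\bot)=\emptyset$. This trimming is harmless for separation: any filter witnessing $p\not\leq q$ already contains $p$ and omits $q$, hence is automatically nonempty and proper and survives in the restricted $K$.
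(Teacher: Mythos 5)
Your proof is correct and takes essentially the same route as the paper's: filters-as-points for $1\Rightarrow 2$, and the point-preimage $h^{-1}[x]=\{r\in P: x\in h(r)\}$ as the separating $\abf$ for $2\Rightarrow 1$. Your closing paragraph on trimming the index set to the nonempty proper $\abf$s is a correct refinement the paper silently omits: under definition \ref{D;wfilter} both $\emptyset$ and $P$ are $\abf$s, so the paper's map over \emph{all} filters does not literally meet the top/bottom clause of definition \ref{D;rep}, and your restriction repairs this while, as you note, leaving separation and the meet/join computations untouched.
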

\begin{proof}
\mbox{}
\begin{enumerate}
\item[]$1.\Rightarrow 2.$ Let $\cF$ be the set of $\abf$s of $P$ and define $h$ by $h(p)=\{\gamma\in \cF:p\in \gamma\}$. If $\cF$ is separating over $P$ then $h$ will certainly be an order embedding. Moreover, if $X\subseteq P$ with $|X|<\alpha$ and $\bw X$ defined in $P$ then clearly $h(\bw X)\subseteq \bigcap h[X]$ and if there is an $\abf$ $\gamma$ with $x\in \gamma$ for all $x\in X$ then $\bw X \in \gamma$ too, so $h(\bw X)=\bigcap h[X]$. Similarly, if $Y\subseteq P$ with $|Y|<\beta$ and $\bv Y$ defined in $P$, then clearly $h(\bv Y)\supseteq \bigcup h[Y]$, and if there is an $\abf$ $\gamma$ with $\bv Y\in \gamma$ then there must be $y\in Y$ with $y\in\gamma$ so $h(\bv Y)=\bigcup h[Y]$. 
\item[]$2.\Rightarrow 1.$ Let $h\colon P\to \wp(X)$ be an $(\alpha,\beta)-$representation and let $a \not\leq b\in P$. Then there is $x\in X$ with $x\in h(a)$ and $x\notin h(b)$. Consider $h^{-1}[x]=\{p\in P: x\in h(p)\}$. Then clearly $a\in h^{-1}[x]$ and $b\notin h^{-1}[x]$. We will show that $h^{-1}[x]$ is an $\abf$. If $p\in h^{-1}[x]$ and $q\geq p$ then $q\in h^{-1}[x]$ as $h(p)\subseteq h(q)$ so $h^{-1}[x]$ is up-closed. Let $S\subseteq h^{-1}[x]$ with $|S|<\alpha$ and suppose $\bw S$ is defined in $P$. By definition we have $x\in h(s)$ for all $s\in S$, so $x\in \bigcap h[S]=h(\bw S)$, and thus $\bw S \in h^{-1}[x]$. Similarly, if $S\subseteq P$ with $|S|<\beta$ and $\bv S$ defined and in $h^{-1}[x]$ then by definition $x\in h(\bv S)=\bigcup h[S]$ so $x\in h(s)$ for some $s\in S$ and thus $s\in h^{-1}[x]$ for some $s\in S$. So $h^{-1}[x]$ is an $\abf$ and we are done.
\end{enumerate}
\end{proof}
There is of course a dual version of this theorem with ideals in place of filters. There are also versions of this result for $(C,\beta)$, $(\alpha, C)$ and $C$-filters obtainable by simply substituting $C$ consistently. The proofs are essentially the same.

\begin{corollary}
When $\alpha,\beta\leq\omega$ a poset $P$ is $\abr$ if and only if there is a distributive lattice $L$ and an $\abe$ $e\colon P \to L$.
\end{corollary}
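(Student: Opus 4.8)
The forward implication is essentially immediate, and the first thing I would do is simply record it: if $h\colon P\to\wp(X)$ is an $(\alpha,\beta)$-representation then $\wp(X)$, regarded as a field of sets, is in particular a distributive lattice, so $h$ already exhibits $P$ as $(\alpha,\beta)$-embedded in a distributive lattice. (Should we want $L$ bounded, we can instead take the sublattice of $\wp(X)$ generated by $h[P]\cup\{\emptyset,X\}$, which is still distributive and into which the top and bottom of $P$, if present, map to $X$ and $\emptyset$.)

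For the converse the plan is to post-compose $e$ with a representation of $L$ supplied by Theorem \ref{T;Lrep}. Given an $(\alpha,\beta)$-embedding $e\colon P\to L$ into a distributive lattice $L$, let $K$ be the set of prime filters of $L$ and define $g\colon L\to\wp(K)$ by $g(a)=\{F\in K: a\in F\}$; by Theorem \ref{T;Lrep} (via Theorem \ref{T;PrimeIdeal}) this is a bounded lattice embedding, and as a lattice homomorphism it preserves all finite meets and joins, realising them as intersections and unions. I would then set $h=g\circ e$. The key observation is that since $2<\alpha,\beta\leq\omega$, every index set $I$ with $|I|<\alpha$ or $|I|<\beta$ is finite, so the meets and joins $e$ must preserve are exactly finite ones: if $\bw_I p_i$ is defined in $P$ with $|I|<\alpha$ then $e(\bw_I p_i)=\bw_I e(p_i)$, and $g$ carries this finite meet to $\bigcap_I h(p_i)$, giving $h(\bw_I p_i)=\bigcap_I h(p_i)$; the argument for joins of size $<\beta$ is dual. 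Being a composite of order embeddings, $h$ is an order embedding, so it is an $(\alpha,\beta)$-representation.

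The only real obstacle is the bounds, since Theorem \ref{T;Lrep} is stated for bounded lattices while $L$ need not be bounded, and Definition \ref{D;rep} requires a top or bottom of $P$ to be sent to $X$ or $\emptyset$. I would deal with this by first arranging that $e$ carries the bounds of $P$ to the bounds of $L$. If $P$ has a top $\top$, then $e[P]\subseteq\{x\in L:x\leq e(\top)\}$, a distributive sublattice of $L$ with top $e(\top)$ in which meets and joins of members of $e[P]$ agree with those computed in $L$; restricting the codomain to it (and dually to the principal filter above $e(\bot)$ when $P$ has a bottom) lets me assume $e(\top)=\top_L$ and $e(\bot)=\bot_L$. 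Any bound of $L$ still missing can be adjoined freely, since whenever $\bv Y$ or $\bw Y$ is defined in $P$ the set $e[Y]$ already has the relevant bound available in $L$, so the finite joins and meets $e$ preserves are undisturbed. With $L$ bounded and $e$ bound-preserving, $g$ satisfies $g(\top_L)=K$ and $g(\bot_L)=\emptyset$, so $h$ sends the top and bottom of $P$ to $K$ and $\emptyset$, completing the representation. I expect this bookkeeping around the bounds to be the fiddliest part; the core of the argument is the triviality that finite lattice operations are preserved by any lattice homomorphism, which is exactly what the hypothesis $\alpha,\beta\leq\omega$ lets us exploit.
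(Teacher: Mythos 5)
Your proof is correct, and it reaches the conclusion by a route that differs in organization, though not in underlying substance, from the paper's. For the converse the paper never assembles a concrete representation: given $p\not\leq q$ in $P$ it applies Theorem \ref{T;PrimeIdeal} to obtain a prime filter of $L$ containing $e(p)$ but not $e(q)$, notes that the preimage under $e$ of a prime filter is an $\abf$ of $P$ (this is where $\alpha,\beta\leq\omega$ enters: the meets $e$ must respect are finite, so they stay in the filter, and primality pulls back along the preserved finite joins), and then cites the separation criterion of Theorem \ref{T;rep}. You instead push forward, composing $e$ with the prime-filter representation $g$ of $L$ from Theorem \ref{T;Lrep} and verifying directly that $h=g\circ e$ is an $\abe$ into a power set; the finiteness observation you isolate is exactly the same one. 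The two arguments are mirror images of each other: your $h$ sends $p$ to the set of prime filters $F$ with $p\in e^{-1}[F]$, so the fibres of your representation are precisely the separating filters in the paper's argument. What the paper's route buys is brevity, since Theorem \ref{T;rep} is already proved and absorbs all the verification; what your route buys is an explicit and correct treatment of the bounds, which the paper's one-line proof passes over: Definition \ref{D;rep} demands that a top/bottom of $P$ be sent to $X$/$\emptyset$, and Theorem \ref{T;Lrep} concerns bounded lattices, so your restriction of the codomain to the interval between $e(\bot)$ and $e(\top)$, followed by free adjunction of any missing bounds (both steps preserving distributivity and the finite meets and joins at issue), is genuinely needed bookkeeping on your route. (On the paper's route the same issue is implicitly absorbed into Theorem \ref{T;rep}, where it can be handled by taking the separating family to consist of proper nonempty filters, any filter witnessing a separation being automatically such.)
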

\begin{proof}
If $P$ is representable then the representation provides a suitable distributive lattice. Conversely, if $L$ is a distributive lattice and $e\colon P\to L$ is an $\abe$ then, by theorem \ref{T;PrimeIdeal}, whenever $p\not\leq q$ in $P$ there is a prime filter of $L$ containing $e(p)$ but not $e(q)$. The preimage of this filter under $e$ is an $\abf$ so the $\abf$s of $P$ are separating over $P$ and thus $P$ is representable. 
\end{proof}

\begin{corollary}\label{C;compRep}
For a poset $P$ the following are equivalent:
\begin{enumerate}
\item $P$ is completely representable.
\item There is a complete lattice $L$ with the following properties:
\begin{enumerate}
\item the completely join-irreducibles of $L$ are join-dense in $L$, 
\item $L$ satisfies the distributivity property that $p\wedge \bv X=\bv_X\{p\wedge x\}$ for all $\{p\}\cup X\subseteq L$,
\item there is a complete embedding $e\colon P\to L$.
\end{enumerate}
\item There is a complete lattice $L$ with the following properties:
\begin{enumerate}
\item the completely meet-irreducibles of $L$ are meet-dense in $L$, 
\item $L$ satisfies the distributivity property that $p\vee \bw X=\bw_X\{p\vee x\}$ for all $\{p\}\cup X\subseteq L$,
\item there is a complete embedding $e\colon P\to L$.
\end{enumerate}
\end{enumerate}
\end{corollary}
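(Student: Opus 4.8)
The plan is to establish $1 \iff 2$ directly and then derive $1 \iff 3$ by duality, noting that condition 3 is exactly the order-dual of condition 2. The implication $1 \Rightarrow 2$ is immediate: given a complete representation $h \colon P \to \wp(X)$, I would take $L = \wp(X)$. This is a complete lattice whose completely join-irreducible elements are precisely the singletons $\{x\}$, and these are join-dense since every subset is the union of its singletons; moreover $\wp(X)$ satisfies the complete distributive law. Thus (a) and (b) hold automatically and $h$ itself witnesses (c).

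The substantial direction is $2 \Rightarrow 1$. Writing $J$ for the set of completely join-irreducible elements of $L$, I would define $g \colon L \to \wp(J)$ by $g(l) = \{j \in J : j \leq l\}$ and show it is a complete embedding. Order-preservation is clear, and $g$ reflects order by join-density (a): if $g(l_1) \subseteq g(l_2)$ then $l_1 = \bigvee\{j \in J : j \leq l_1\} \leq l_2$. Preservation of arbitrary meets is automatic from the definition of $g$. The crux is preservation of arbitrary joins. The inclusion $\bigcup_i g(l_i) \subseteq g(\bigvee_i l_i)$ is trivial; for the reverse, take $j \in J$ with $j \leq \bigvee_i l_i$ and apply the distributivity (b) to get $j = j \wedge \bigvee_i l_i = \bigvee_i (j \wedge l_i)$, whence complete join-irreducibility of $j$ forces $j = j \wedge l_i$, i.e. $j \leq l_i$, for some $i$. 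Since $g$ is then a complete embedding, the composite $g \circ e \colon P \to \wp(J)$ is a complete representation of $P$, establishing condition 1.

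For $1 \iff 3$ I would use duality. Complete representability is self-dual: if $h \colon P \to \wp(X)$ is a complete representation, then $p \mapsto X \setminus h(p)$ is a complete representation of $P^\delta$, since complementation reverses order and swaps unions with intersections; hence $P$ is completely representable iff $P^\delta$ is. Reading condition 3 for $P$ inside the order-dual lattice $L^\delta$ converts meet-irreducibles to join-irreducibles, meet-density to join-density, and the distributive law of condition 3 into that of condition 2, so condition 3 for $P$ coincides with condition 2 for $P^\delta$. Chaining this with the equivalence $1 \iff 2$ applied to $P^\delta$ and with the self-duality just noted gives $1 \iff 3$. I expect the join-preservation step for $g$ to be the main obstacle, since it is the only point at which the distributivity hypothesis and complete join-irreducibility are both genuinely used.
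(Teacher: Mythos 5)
Your proof is correct and takes essentially the same route as the paper: in both, the crux is that the distributivity hypothesis 2(b) forces completely join-irreducible elements to be completely join-prime, while join-density 2(a) provides separation, and condition 3 is disposed of by order duality. The only difference is packaging — you build the representation explicitly as $g\circ e$ with $g(l)=\{j\in J: j\leq l\}$, whereas the paper pulls the principal filters $j^\uparrow$ back along $e$ and invokes theorem \ref{T;rep}; these are the same argument unfolded.
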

\begin{proof}
We prove $1.\iff 2.$ and the rest follows from order duality. The forward direction is trivial. For the converse note that by the distributivity property of $L$ if $p\in L$ is completely join-irreducible then it also has the property that whenever $p\leq \bv X$ there must be $x\in X$ with $p\leq x$ (this property is usually known as complete join-primality). Thus the completely join-irreducibles of $L$ generate a separating set of $C$-filters in $L$, and the preimages of these under $e$ is a separating set of $C$-filters of $P$.  
\end{proof}

\begin{remark}
In the preceding result note that 2(a) and 2(b) together imply not only that $L$ is an algebraic frame, but also that it is a co-algebraic co-frame. In fact, it follows from theorem \ref{T;rep} that $L$ is completely representable, and thus that it is completely distributive. Moreover, since $L$ is complete, complete representability also implies that the completely meet-prime elements are meet-dense in $L$. So any complete lattice satisfying 2(a) and 2(b) will not only satisfy 3(a) and 3(b) (and vice versa by duality), but also the superficially stronger condition of being completely distributive, join-generated by its completely join-primes, and meet-generated by its completely meet-primes.
\end{remark}

If $\alpha\leq \alpha'$ and $\beta\leq \beta'$ then the fact that $P$ is $\abr$ whenever it is $(\alpha',\beta')$-representable comes straight from the definition. The converse however is not true in general, as we see in examples \ref{E;not4} and \ref{E;notN} below. Note that \cite{Sch72} claims implicitly that meet-semilattices can be constructed where the partial join operation is $m$-representable but not $n$-representable (with $m<n\leq\omega$), but does not provide one. Many years later the existence of such semilattices was raised as an open question at the end of \cite{PawTha80}. It is shown in \cite{SCLS85} that any such semilattice example cannot be finite. An earlier paper \cite{HooShu84} had claimed that no such semilattice could exist, and thus that for semilattices $n$-representability implies $\omega$-representability for all $n<\omega$. However, the MathSciNet review of this article notes that the paper makes essential use of the original, slightly erroneous form of \cite[theorem 2.2]{Bal69}, and hence the result is not considered to be correct. A corrected version of \cite[theorem 2.2]{Bal69} can be found as \cite[theorem 2.2]{HooShu82}. 

\begin{example}[A poset that is $3$-representable but not $4$-representable]\label{E;not4}
Consider the following poset $P$:
\[\xymatrix{
& & \bullet\ar@{-}[dddll]\ar@{-}[dddrr]  &  \bullet\ar@{-}[dddlll]\ar@{-}[dddrrr]  & \bullet\ar@{-}[dddll]\ar@{-}[dddrr]  \\
\bullet & & & \bullet & & & \bullet\\
&&& \bullet_p\ar@{-}[dlll]\ar@{-}[dl]\ar@{-}[dr]\ar@{-}[drrr]\\
\bullet\ar@{-}[uu]& & \ar@{-}[uull]\bullet\ar@{-}[uur] & & \ar@{-}[uul]\bullet\ar@{-}[uurr] & & \ar@{-}[uu]\bullet \\
& & & \bullet_q\ar@{-}[ulll]\ar@{-}[ul]\ar@{-}[ur]\ar@{-}[urrr] 
}\] 
Then no non-trivial binary joins are defined in $P$, so we can find a $3$-representation for $P$ by taking the principal up-sets and appealing to theorem \ref{T;rep}. However, there is no $4$-filter containing $p$ but not $q$, so by the same theorem $P$ has no $4$-representation.
\end{example} 

Following the suggestion of the referee we generalize example \ref{E;not4} to find posets that are $(n-1)$-representable but not $n$-representable for arbitrary $n<\omega$.

\begin{example}[A poset that is $(n-1)$-representable but not $n$-representable]\label{E;notN}
Let $X=\{x_1,...,x_n\}$ be a set with $|X|=n$. For every $s\subset X$ with $|s|= n-2$ define a new element $y_s$. Let $Y$ be the set of all the $y_s$ elements. Let $p$ and $q$ not occur in either $X$ or $Y$. We define $P_n$ by extending the trivial order on $X\cup Y\cup\{p,q\}$ as follows:
\begin{enumerate}
\item $q < x$ for all $x\in X$
\item $p > x$ for all $x\in X$
\item $y_s > x \iff x\in s$
\end{enumerate}
and closing for transitivity. Then $P_n$ is $m$-representable for all $m<n$ because there are no non-trivial joins of cardinality less than or equal to $m-1$ defined in $P$. To see this let $t\subset X$ with $2\leq|t|<m$ and define $S_t=\{s\subset X: t\subseteq s$ and $|s|=n-2\}$. Then the upper bounds of $t$ are $\{y_s:s\in S_t\}\cup\{p\}$, and this set is an antichain. So we can once again take the set of principal up-sets and appeal to theorem \ref{T;rep}. However, $p$ is the only upper bound for $X\setminus\{x_1\}$, so any $n$-filter containing $p$ must also contain $x$ for some $x\in X\setminus\{x_1\}$. Similarly any such $n$-filter must also contain $x'$ for some $x'\in X\setminus\{x\}$, and the meet of $x$ and $x'$ is $q$. So there can be no $n$-filter containing $p$ but not $q$ and thus $P_n$ cannot be $n$-representable.  
\end{example}

Note that the posets in example \ref{E;notN} are actually $(\alpha,(n-1))$-representable but not $(\alpha,n)$-representable for all $\alpha>2$.

The following example demonstrates that a weaker axiom schema along the lines of $\bar{D}2$ from definition \ref{D;D2} is not sufficient for our needs.
\begin{example}[A poset satisfying $\bar{D}2$ that fails to be $3$-representable]\label{E;DnotRep}
Let $P$ be the poset in the diagram below. Then there is no $3$-filter containing $p$ but not $q$.
\[\xymatrix{ & & \bullet_\top\ar@{-}[dll]\ar@{-}[d]\ar@{-}[drr] \\
\bullet_a\ar@{-}[dd] & & \ar@{-}[ddll]\bullet_p\ar@{-}[ddrr] & & \bullet_b\ar@{-}[dd] \\
& &  \bullet_q\ar@{-}[dll]\ar@{-}[drr]  \\
\bullet_{\bot_1} & & & & \bullet_{\bot_2}
}\]

We must show that $P$ satisfies $\bar{D}2$. We shall show that for all possible choices of $x,y,z\in P$, either one or both of $x\wedge(y\vee z)$ and $(x\wedge y)\vee (x\wedge z)$ is undefined or they are equal. To this end we will use the fact (easily checkable) that in any poset if $x,y,z$ satisfy any of the following conditions then $\bar{D}2$ holds for that particular choice of $x,y,z$:
\begin{enumerate}
\item $x\leq y$ or $x\leq z$ \label{C1}
\item $y\leq x$ and $z\leq x$ \label{C2}
\item $y\leq z$ or $z\leq y$ \label{C3}
\end{enumerate}
Now we show that there is no element of $P$ that makes a suitable choice for $x$ in a counterexample to $\bar{D}2$:
\begin{enumerate}
\item[]$x\neq\top$. This follows directly from \ref{C2}.
\item[]$x\neq \bot_1$ (and $x\neq \bot_2$ by symmetry). If $x=\bot_1$ then in order for $x\wedge y$ to be defined we must have $x\leq y$ and so we apply \ref{C1}.
\item[]$x\neq q$. If $x=q$ then in order for $x\wedge y$ to be (non-trivially) defined we must have one of $y=a,b,\bot_1$, or $\bot_2$. Similar considerations apply to $z$ and so by \ref{C3} we must have (wlog) $y=a$ or $\bot_1$, and $z=b$ or $\bot_2$. But then $x\wedge y = \bot_1$ and $x\wedge z = \bot_2$ and so $(x\wedge y)\vee (x\wedge z)$ is undefined.
\item[]$x\neq a$ (and $x\neq b$ by symmetry). If $x= a$ then in order for $x\wedge y$ to be (non-trivially) defined we must have one of $y=\bot_1,q,p$, or $\top$. We can rule out $y=\top$ with \ref{C1}, and if $y=\bot_1$ then if $x\wedge z$ is defined we will have $y\leq z$ and we would apply \ref{C3}. So we must have either $y=p$ or $q$. Similar considerations apply to $z$ and so wlog suppose $y=p$ and $z=q$. Then $y\vee z$ is not defined and we are done.
\item[]$x\neq p$. If $x=p$ then in order for $x\wedge y$ to be (non-trivially) defined we must have one of $y=\bot_1,\bot_2,a,b$, or $\top$. By \ref{C1} we can exclude $y=\top$ and so applying \ref{C3} suppose wlog that $y=a$ or $\bot_1$, and $z=b$ or $\bot_2$. But then as in the case where $x=q$ we conclude that $(x\wedge y)\vee (x\wedge z)$ is not defined. 
\end{enumerate}
\end{example}

To close this section recall theorem \ref{T;PrimeIdeal}, and the similar result for semilattices \cite[theorem 2.2]{Bal69},\cite[theorem 2.2]{HooShu82}. An obvious question is whether analogous results hold for representable posets. For example, if $P$ is $3$-representable, $p\in P$, and $\gamma$ is an up-closed subset of $P$ closed under existing binary meets with $p\notin \gamma$, can $\gamma$ always be extended to a $3$-filter of $P$ that excludes $p$? The answer turns out to be no, as we see in example \ref{E;primeIdeal} below. 

\begin{example}\label{E;primeIdeal}
Consider the following system of sets $P$:\\
\begin{tikzpicture}[scale=.7]
\node (q1) at (0,-3) {$\{x_1,x_4\}$};
\node (x1) at (-4,0) {$\{y,x_1\}$};
\node (x2) at (-2,0) {$\{x_1,x_2\}$};
\node (x3) at (0,0) {$\{x_2,x_3\}$};
\node (x4) at (2,0) {$\{x_3,x_4\}$};
\node (x5) at (4,0) {$\{x_4,y\}$};
\node (y1) at (-3,-4) {$\{x_1\}$};
\node (y2) at (-1,-2) {$\{x_2\}$};
\node (y3) at (1,-2) {$\{x_3\}$};
\node (y4) at (3,-4) {$\{x_4\}$};

\draw (x1)--(y1)--(x2);
\draw (x2)--(y2)--(x3);
\draw (x3)--(y3)--(x4);
\draw (x4)--(y4)--(x5);
\draw (y1)--(q1)--(y4);

\end{tikzpicture}

\noindent
Then $P$ is (completely) represented by itself. Moreover, \begin{equation*}\gamma=\{\{y,x_1\},\{x_2,x_3\},\{x_4,y\}\}\end{equation*} is an up-closed set that is closed under existing binary meets and does not contain $\{x_1,x_4\}$. However, every $3$-filter of $P$ that contains $\gamma$ must contain $\{x_1,x_4\}$.
\end{example}
\end{section}

\begin{section}{Ultraproducts and ultraroots}\label{S;ultras}
This section serves to introduce the aspects of model theory on which we base the results of the following sections.

\begin{definition}[Ultraproduct]\label{D;ultraprod}
Given a language $\sL$, an indexing set $I\neq\emptyset$, a set of $\sL$-structures $\{\gA_i:i\in I\}$, and an ultrafilter $U\subset \wp(I)$, we define an equivalence relation $\sim$ on $\prod_I \gA_i$ by $x\sim y\iff \{i\in I:x(i)=y(i)\}\in U$. We define the ultraproduct of $\{\gA_i:i\in I\}$ over $U$ to be the set of $\sim$ equivalence classes of $\prod_I \gA_i$ and we denote it with $\prod_U\gA_i$. We can use $\prod_U\gA_i$ as an $\sL$-structure by making interpretations as follows:
\begin{itemize}
\item $\prod_U\gA_i\models R([x_1],...,[x_n])\iff\{i\in I:\gA_i\models R_i(x_1(i),...,x_n(i))\}\in U$ for all $n$-ary relation symbols $R$ of $\sL$, where $R_i$ is the interpretation of that relation in $\gA_i$.
\item $f([x_1],...,[x_n])=[(f_i(x_1(i),...,x_n(i)))_I]$ for all $n$-ary function symbols $f$ of $\sL$, where $f_i$ is the interpretation of that function in $\gA_i$ and $(f_i(x_1(i),...,x_n(i)))_I$ is the element of $\prod_I \gA_i$ obtained by applying $f_i$ to $(x_1(i),...,x_n(i))$ for each $i\in I$.
\item $c=[\bar{c}]$, where $\bar{c}(i)$ is the interpretation of $c$ in $\gA_i$ for all $i\in I$ for all constant symbols $c$ of $\sL$.
\end{itemize}
It's easy to check that this interpretation defines an $\sL$-structure (see e.g. \cite[Proposition 4.1.7]{Chakei90}). When $\gA_i=\gA_j=\gA$ for all $i,j\in I$ we say $\prod_U\gA_i$ is the \emph{ultrapower} of $\gA$ over $U$, and we write $\prod_U\gA$. In this case we say $\gA$ is an \emph{ultraroot} of $\prod_U\gA$.
\end{definition}

\begin{theorem}[\L o\'s]\label{T;Los}
Let $\sL$ be a language, let $\phi$ be an $\sL$-sentence, let $I\neq\emptyset$ be an indexing set, let $\{\gA_i:i\in I\}$ be a set of $\sL$-structures, and let $U$ be an ultrafilter of $\wp(I)$. Then $\prod_U\gA_i\models \phi\iff \{i\in I:\gA_i\models \phi\}\in U$.
\end{theorem}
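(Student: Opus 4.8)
The plan is to prove the theorem by induction on the structure of $\phi$. Since a sentence contains no free variables, it is cleaner to prove the more general statement that for every $\sL$-formula $\phi(v_1,\dots,v_n)$ and all $[x_1],\dots,[x_n]\in\prod_U\gA_i$ we have
\[\prod_U\gA_i\models\phi([x_1],\dots,[x_n])\iff\{i\in I:\gA_i\models\phi(x_1(i),\dots,x_n(i))\}\in U,\]
from which the theorem follows by taking $n=0$. Before treating formulas I would first dispose of terms, establishing by a routine induction on term structure that for every $\sL$-term $t(v_1,\dots,v_n)$ the interpretation in the ultraproduct satisfies $t([x_1],\dots,[x_n])=[(t(x_1(i),\dots,x_n(i)))_I]$; this is immediate for variables and constants and is pushed through function symbols using the clause defining the interpretation of function symbols in $\prod_U\gA_i$.

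For the base case of the formula induction I would handle atomic formulas. An equation $t=s$ reduces, via the term lemma, to the defining condition of the relation $\sim$, while a relational atom $R(t_1,\dots,t_m)$ reduces, again via the term lemma, directly to the clause defining the interpretation of $R$ in $\prod_U\gA_i$. The propositional connectives are where the ultrafilter axioms do the work. For negation I would use that $U$ is an ultrafilter, so that for any $A\subseteq I$ exactly one of $A$ and $I\setminus A$ lies in $U$; this converts $\{i:\gA_i\not\models\psi\}$ into the complement of $\{i:\gA_i\models\psi\}$ and matches the failure of $\psi$ in the ultraproduct. For conjunction I would use that a filter is closed under finite intersection and upward closed, so that $A\cap B\in U$ iff $A\in U$ and $B\in U$; disjunction then follows either directly or by combining negation and conjunction.

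The genuinely interesting step, and the one I expect to be the main obstacle, is the existential quantifier $\phi=\exists v_{n+1}\,\psi$. The forward direction is easy: a witness $[y]$ in the ultraproduct gives, by the inductive hypothesis, a set $\{i:\gA_i\models\psi(\dots,y(i))\}\in U$ which is contained in $\{i:\gA_i\models\exists v_{n+1}\,\psi\}$, and $U$ is upward closed. The converse is where care is needed: assuming the set $J=\{i:\gA_i\models\exists v_{n+1}\,\psi(\dots)\}$ lies in $U$, I would, for each $i\in J$, choose a witness $y(i)\in\gA_i$ for the existential and define $y(i)$ arbitrarily for $i\notin J$. Selecting witnesses across all coordinates simultaneously is exactly where the Axiom of Choice enters. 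With $y$ so defined, the inductive hypothesis applied to $\psi$ shows $\{i:\gA_i\models\psi(\dots,y(i))\}\supseteq J\in U$, so $[y]$ witnesses $\exists v_{n+1}\,\psi$ in the ultraproduct. This completes the induction, and the theorem is the case $n=0$.
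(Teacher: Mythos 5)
Your proposal is correct, but note that the paper does not prove this theorem at all: it defers to the literature, citing \cite[Theorem 4.1.9]{Chakei90}, and your induction on formula complexity (term lemma, atomic case via the definition of $\sim$ and of the relational interpretations, connectives via the ultrafilter properties, and witness selection via the Axiom of Choice for the existential step) is essentially the standard proof given in that reference. The only cosmetic omission is the universal quantifier, which is handled by treating $\forall$ as $\neg\exists\neg$, so your case analysis is complete once that convention is stated.
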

Theorem \ref{T;Los} is sometimes referred as the fundamental theorem of ultraproducts (see e.g. \cite[Theorem 4.1.9]{Chakei90} for a proof). It allows axiomatization results to be proved with largely algebraic methods. In particular we shall use the following theorem based on the deep results of Shelah \cite{She71}.

\begin{theorem}\emph{(\cite[Theorem 3.32]{HirHod02})\textbf{.}}\label{T;elemcond}\label{T;elemUltra}
Given a language $\sL$ and a class $\sC$ of $\sL$-structures, $\sC$ is elementary if and only if $\sC$ is closed under taking isomorphic copies, ultraproducts, and ultraroots.
\end{theorem}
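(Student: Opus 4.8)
The plan is to prove the two directions separately. The forward implication (an elementary class is closed under the three operations) is a direct application of \L o\'s's theorem (Theorem \ref{T;Los}), while the reverse implication is the substantial one and will rest on the Keisler--Shelah ultrapower isomorphism theorem, which is where Shelah's work enters.

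For the forward direction I would suppose $\sC=\mathrm{Mod}(T)$ for some set $T$ of $\sL$-sentences. Closure under isomorphism is immediate, since satisfaction of a first-order sentence is an isomorphism invariant. For closure under ultraproducts, if each $\gA_i\models T$ then for every $\phi\in T$ the set $\{i\in I:\gA_i\models\phi\}$ equals $I$ and so lies in every ultrafilter $U$; \L o\'s then gives $\prod_U\gA_i\models\phi$, whence $\prod_U\gA_i\models T$. For closure under ultraroots, observe that in an ultrapower the index set $\{i\in I:\gA\models\phi\}$ is either $I$ or $\emptyset$, so \L o\'s yields $\prod_U\gA\models\phi\iff\gA\models\phi$; thus $\gA\equiv\prod_U\gA$, and if the ultrapower satisfies $T$ then so does $\gA$.

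For the reverse direction I would assume $\sC$ is closed under the three operations and set $T=\mathrm{Th}(\sC)$, the set of all sentences true in every member of $\sC$. Trivially $\sC\subseteq\mathrm{Mod}(T)$, so it suffices to show $\mathrm{Mod}(T)\subseteq\sC$. Fixing $\gA\models T$, the first task is to manufacture a member of $\sC$ elementarily equivalent to $\gA$. For each finite $\Delta\subseteq\mathrm{Th}(\gA)$ some $\gB_\Delta\in\sC$ satisfies $\Delta$: otherwise every member of $\sC$ would satisfy $\neg\bigwedge\Delta$, placing this sentence in $T$ and hence forcing $\gA\models\neg\bigwedge\Delta$, a contradiction. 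Taking an ultraproduct of the $\gB_\Delta$ over an ultrafilter refining the filter generated by the sets $\{\Gamma:\Delta\subseteq\Gamma\}$ then produces, via \L o\'s, a structure $\gB\in\sC$ with $\gB\models\mathrm{Th}(\gA)$; completeness of $\mathrm{Th}(\gA)$ upgrades this to $\gA\equiv\gB$.

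The final step, and the main obstacle, is to pass from elementary equivalence to membership in $\sC$. Here I would invoke the Keisler--Shelah theorem, which guarantees an ultrafilter $U$ with $\prod_U\gA\cong\prod_U\gB$. Closure under ultraproducts gives $\prod_U\gB\in\sC$, closure under isomorphism then gives $\prod_U\gA\in\sC$, and closure under ultraroots finally gives $\gA\in\sC$, as required. The hard part is precisely this appeal: everything else is a compactness-style argument packaged through \L o\'s's theorem, but without the Keisler--Shelah isomorphism theorem (in Shelah's form, which avoids any use of the generalized continuum hypothesis) there is no way to convert the logical relation $\gA\equiv\gB$ into the purely algebraic closure conditions we are given.
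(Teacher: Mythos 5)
The paper offers no proof of this statement for you to be compared against: it is quoted directly from \cite[Theorem 3.32]{HirHod02}, with the surrounding text noting only that it rests on the deep results of Shelah \cite{She71}, and it is then used as a black box in the closure arguments of section \ref{S;elem}. Judged on its own, your argument is the standard proof of this characterization, and it is correct. The forward direction is, as you say, three routine applications of \L o\'s's theorem (theorem \ref{T;Los}), the ultraroot case using the fact that an ultrapower is elementarily equivalent to its root. For the converse, your compactness-style step is sound: the filter on the set of finite subsets of $\mathrm{Th}(\gA)$ generated by the sets $\{\Gamma:\Delta\subseteq\Gamma\}$ has the finite intersection property, hence extends to an ultrafilter, and \L o\'s then makes the ultraproduct of the witnesses $\mathfrak{B}_\Delta$ a member of $\sC$ (by closure under ultraproducts) satisfying all of $\mathrm{Th}(\gA)$, hence elementarily equivalent to $\gA$. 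The concluding appeal to the Keisler--Shelah isomorphic-ultrapowers theorem, followed by closure under ultraproducts, isomorphism, and ultraroots in that order, is exactly the intended route, and you correctly identify that this single step is where all the depth lies --- it is precisely the Shelah reference the paper gestures at. The one thing to keep in mind is that your write-up is therefore a proof modulo a major cited theorem rather than a self-contained argument; this is the same logical status as the paper's treatment, but with the reduction to Keisler--Shelah spelled out explicitly, which is a genuine improvement in transparency.
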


\end{section}

\begin{section}{Elementary classes of representable posets}\label{S;elem}
In this section $2<\alpha,\beta\leq \omega$. The results do not necessarily hold for larger values.
\begin{lemma}\label{L;filters}
Let $I$ be an indexing set, let $P_i$ be a poset for each $i\in I$, let $U$ be an ultrafilter of $\wp(I)$, and let $u\in U$. Then if $\gamma_i$ is an $\abf$ of $P_i$ for each $i\in u$ then $\Gamma=\{[x]\in \prod_U P_i: \{i:x(i)\in\gamma_i\}\in U\}$ is an $\abf$ of $\prod_U P_i$. 
\end{lemma}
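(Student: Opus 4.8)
The plan is to verify directly that $\Gamma$ satisfies the three defining conditions of an $\abf$ from Definition~\ref{D;wfilter}: upward closure, closure under existing meets of fewer than $\alpha$ elements, and primeness with respect to existing joins of fewer than $\beta$ elements. The linchpin of the whole argument is the observation that, because $2<\alpha,\beta\leq\omega$, every meet or join under consideration is of a \emph{fixed finite} number of elements, and for a fixed finite $k$ the assertion ``$m$ is the greatest lower bound of $x_1,\dots,x_k$'' is expressible by a first-order $\sL$-formula $\phi(m,x_1,\dots,x_k)$, namely $\bigwedge_{j}(m\leq x_j)\wedge\forall z[(\bigwedge_j z\leq x_j)\to z\leq m]$, and dually for joins. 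This lets me transfer the existence and value of meets and joins between $\prod_U P_i$ and the factors $P_i$ via Theorem~\ref{T;Los} (\L o\'s). Before any of this I would record the routine fact that membership in $\Gamma$ is independent of the choice of representative: if $\{i:x(i)=y(i)\}\in U$ then $\{i\in u:x(i)\in\gamma_i\}$ and $\{i\in u:y(i)\in\gamma_i\}$ agree on a set of $U$, so one lies in $U$ iff the other does.

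For upward closure, if $[x]\in\Gamma$ and $[x]\leq[y]$ then both $\{i\in u:x(i)\in\gamma_i\}$ and $\{i:x(i)\leq y(i)\}$ lie in $U$; on their intersection the upward closure of each $\gamma_i$ gives $y(i)\in\gamma_i$, so $[y]\in\Gamma$. For the meet condition I would take $\{[x_1],\dots,[x_k]\}\subseteq\Gamma$ with $k<\alpha$ and let $[m]=\bw_j[x_j]$ exist in $\prod_U P_i$. Then $\prod_U P_i\models\phi([m],[x_1],\dots,[x_k])$, so by \L o\'s the set $J=\{i:m(i)=\bw_j x_j(i)\text{ in }P_i\}$ lies in $U$. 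Intersecting $J$ with $u$ and with the finitely many sets $\{i\in u:x_j(i)\in\gamma_i\}\in U$ yields a member of $U$ at every point of which the meet $m(i)$ exists and $\{x_1(i),\dots,x_k(i)\}\subseteq\gamma_i$; since $\gamma_i$ is an $\abf$ and $k<\alpha$, this forces $m(i)\in\gamma_i$. Hence $\{i\in u:m(i)\in\gamma_i\}\in U$ and $[m]\in\Gamma$.

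The join condition is where the real work lies. Taking $\{[y_1],\dots,[y_l]\}\subseteq\prod_U P_i$ with $l<\beta$, assuming $[s]=\bv_j[y_j]$ exists and $[s]\in\Gamma$, \L o\'s again gives $K=\{i:s(i)=\bv_j y_j(i)\text{ in }P_i\}\in U$, and $A=\{i\in u:s(i)\in\gamma_i\}\in U$. For each $i\in K\cap u\cap A$ the join $s(i)$ exists, lies in $\gamma_i$, and is a join of the $l<\beta$ elements $y_1(i),\dots,y_l(i)$, so primeness of $\gamma_i$ yields \emph{some} index $j$ with $y_j(i)\in\gamma_i$. The main obstacle is that this witnessing index depends on $i$, so I cannot immediately name a single $[y_j]\in\Gamma$. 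I would resolve this using that $U$ is an ultrafilter: partitioning $K\cap u\cap A$ into the at most $l$ pieces according to which index witnesses primeness exhibits a $U$-set as a \emph{finite} union, so by primeness of the ultrafilter one piece $B_{j_0}$ already lies in $U$. Since $B_{j_0}\subseteq\{i\in u:y_{j_0}(i)\in\gamma_i\}$, that set is in $U$ and hence $[y_{j_0}]\in\Gamma$, completing the verification.
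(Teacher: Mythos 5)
Your proof is correct and follows essentially the same route as the paper's: direct verification of upward closure, closure under finite meets by intersecting finitely many sets in $U$, and join-primality via the primality of the ultrafilter (a finite union lying in $U$ forces one piece to lie in $U$). The only difference is presentational: you invoke theorem \ref{T;Los} explicitly to transfer the existence and value of meets and joins to $U$-many coordinates and treat arbitrary finite arities at once, whereas the paper argues the binary case directly, leaves that transfer implicit, and notes that the argument generalizes.
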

\begin{proof}
Let $[x]\in \Gamma$, and let $[y]\geq [x]$. Then let $u_1=\{i:x(i)\in\gamma_i\}\in U$, and $u_2=\{i:y(i)\geq x(i)\}\in U$, so $\{i:y(i)\in \gamma_i\}\supseteq u_1\cap u_2\in U$ and thus $[y]\in \Gamma$. So $\Gamma$ is up-closed.

Now let $[x],[y]\in \Gamma$. A similar argument to the above proves that $[x]\wedge[y]\in\Gamma$. This argument generalizes to arbitrary finite meets.

Finally let $[x]\vee[y]\in \Gamma$, and let $v=\{i:x(i)\vee y(i)\in \gamma_i\}\in U$. Let $u_1=\{i:x(i)\in \gamma_i\}$, and $u_2=\{i:y(i)\in \gamma_i\}$. Then $u_1\cup u_2=v$ and so either $u_1\in U$ or $u_2\in U$. Thus either $[x]\in \Gamma$ or $[y]\in \Gamma$. This argument generalizes to arbitrary finite joins, and so we conclude that $\Gamma$ is an $\abf$.
\end{proof}

\begin{proposition}
The class of $(\alpha,\beta)$-representable posets is closed under ultraproducts.
\end{proposition}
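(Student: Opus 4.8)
The plan is to apply the separation characterization of Theorem \ref{T;rep}, reducing representability of an ultraproduct to a statement about $\abf$s, and then to construct the required separating filters by transferring filters from the factors via Lemma \ref{L;filters}. So I would start with a family $\{P_i:i\in I\}$ of $\abr$ posets and an ultrafilter $U$ on $\wp(I)$; by Theorem \ref{T;rep} it suffices to show that the $\abf$s of $\prod_U P_i$ are separating over $\prod_U P_i$.

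To that end I would fix $[x]\not\leq[y]$ in $\prod_U P_i$ and look for an $\abf$ containing $[x]$ but excluding $[y]$. The first key step uses the ultrafilter property: since $[x]\not\leq[y]$ means $\{i:x(i)\leq y(i)\}\notin U$, the complement $u=\{i:x(i)\not\leq y(i)\}$ lies in $U$. For each $i\in u$ we then have $x(i)\not\leq y(i)$ in the representable poset $P_i$, so Theorem \ref{T;rep} applied to $P_i$ supplies an $\abf$ $\gamma_i$ of $P_i$ with $x(i)\in\gamma_i$ and $y(i)\notin\gamma_i$.

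Next I would feed the family $\{\gamma_i:i\in u\}$ into Lemma \ref{L;filters} to obtain the $\abf$ $\Gamma=\{[z]\in\prod_U P_i:\{i:z(i)\in\gamma_i\}\in U\}$ of $\prod_U P_i$. It then remains only to check that $\Gamma$ separates $[x]$ from $[y]$. Since $x(i)\in\gamma_i$ for every $i\in u$, we have $\{i:x(i)\in\gamma_i\}=u\in U$, so $[x]\in\Gamma$; and since $y(i)\notin\gamma_i$ for every $i\in u$, while $\gamma_i$ is consulted only on $u$, the set $\{i:y(i)\in\gamma_i\}$ is empty and hence not in $U$, so $[y]\notin\Gamma$. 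As $[x]\not\leq[y]$ was arbitrary, the $\abf$s of $\prod_U P_i$ are separating, and Theorem \ref{T;rep} yields that $\prod_U P_i$ is $\abr$.

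I expect no serious obstacle, because the substantive content—that $\Gamma$ really is an $\abf$—is precisely Lemma \ref{L;filters}, which I am entitled to assume. The two points that need care are purely bookkeeping. First, the genuinely essential use of the ultrafilter, rather than merely a filter, occurs in passing from $\{i:x(i)\leq y(i)\}\notin U$ to $u\in U$; this is what guarantees a $U$-large set of coordinates on which a separating filter of the factor exists, and it is the step I would highlight as the crux. Second, because $u\in U$, it is harmless that each $\gamma_i$ is defined only for $i\in u$, since membership in $U$ is insensitive to the behaviour of the defining set off $u$.
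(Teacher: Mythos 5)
Your proof is correct and follows essentially the same route as the paper's: pass from $[x]\not\leq[y]$ to the $U$-large set $u$ of coordinates where $x(i)\not\leq y(i)$, pull separating $\abf$s $\gamma_i$ from the factors via Theorem \ref{T;rep}, and assemble them into the filter $\Gamma$ of Lemma \ref{L;filters}, which contains $[x]$ but not $[y]$. The only cosmetic difference is in how you verify $[y]\notin\Gamma$ (you note $\{i:y(i)\in\gamma_i\}$ is empty, while the paper observes its complement $u$ lies in $U$), which is an equivalent bookkeeping step.
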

\begin{proof}
Let $I$ be an indexing set, let $P_i$ be an $(\alpha,\beta)$-representable poset for each $i\in I$, let $[a],[b]\in \prod_U P_i$ and let $[a]\not\leq[b]$. Let $u=\{i:a(i)\not\leq b(i)\}\in U$. Since $P_i$ is $(\alpha,\beta)$-representable for each $i\in u$ there is an $\abf$ $\gamma_i$ with $a(i)\in \gamma_i$ and $b(i)\notin \gamma_i$. By lemma \ref{L;filters} $\Gamma=\{[x]\in \prod_U P_i: \{i:x(i)\in\gamma_i\}\in U\}$ is an $\abf$ of $\prod_U P_i$, and clearly $[a]\in \Gamma$. Since $\{i:b(i)\not\in\gamma_i\}= u\in U$ we also have $[y]\not\in\Gamma$ so we are done.
\end{proof}

\begin{lemma}\label{L;filterFromPower}
Let $P$ be a poset, let $I$ be an indexing set, let $U$ be an ultrafilter of $\wp(I)$, and let $\Gamma$ be an $\abf$ of $\prod_U P$. Given $p\in P$ define $\bar{p}\in \prod_I P$ by $\bar{p}(i)=p$ for all $i\in I$. Then $\gamma=\{p\in P:[\bar{p}]\in \Gamma\}$ is an $\abf$ of $P$.
\end{lemma}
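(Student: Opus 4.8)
The claim is that pulling back an $\abf$ $\Gamma$ of the ultrapower $\prod_U P$ along the diagonal map $p \mapsto [\bar{p}]$ yields an $\abf$ $\gamma$ of $P$. The plan is to verify the three defining properties of an $\abf$ from Definition \ref{D;wfilter} directly: upward closure, closure under existing meets of cardinality $<\alpha$, and the prime-like condition for existing joins of cardinality $<\beta$. The key structural fact I would lean on is that the diagonal map $p\mapsto[\bar{p}]$ is order preserving and, crucially, \emph{preserves existing finite meets and joins}, since $\prod_U P$ is an ultrapower of $P$: if $\bw X$ is defined in $P$ then $[\overline{\bw X}] = \bw\{[\bar{x}]:x\in X\}$ in $\prod_U P$, and dually for joins. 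This follows because finite meets and joins are first-order definable conditions on the order relation, so by \L o\'s's theorem (Theorem \ref{T;Los}) the diagonal embedding reflects and preserves them.

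First I would check upward closure. If $p\in\gamma$ and $q\geq p$ in $P$, then $[\bar{p}]\in\Gamma$ and $[\bar{q}]\geq[\bar{p}]$ in $\prod_U P$ (the set $\{i:q\geq p\}$ is all of $I\in U$), so $[\bar{q}]\in\Gamma$ by upward closure of $\Gamma$, giving $q\in\gamma$. Next, for the meet condition, suppose $X\subseteq\gamma$ with $|X|<\alpha$ and $\bw X$ is defined in $P$. Then $[\bar{x}]\in\Gamma$ for each $x\in X$, and since the diagonal map preserves this existing finite meet, $[\overline{\bw X}] = \bw\{[\bar{x}]:x\in X\}$, which lies in $\Gamma$ because $\Gamma$ is closed under existing meets of size $<\alpha$. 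Hence $\bw X\in\gamma$. The join condition is dual: if $Y\subseteq P$ with $|Y|<\beta$, $\bv Y$ defined, and $\bv Y\in\gamma$, then $[\overline{\bv Y}]=\bv\{[\bar{y}]:y\in Y\}\in\Gamma$, so the prime condition on $\Gamma$ forces $[\bar{y}]\in\Gamma$ for some $y\in Y$, i.e.\ $y\in\gamma$.

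The main obstacle, and the step that deserves the most care, is justifying that the diagonal embedding preserves the relevant existing meets and joins --- that is, that $[\overline{\bw X}]$ really equals the meet of $\{[\bar x]:x\in X\}$ \emph{computed in the ultrapower}. It is not automatic that a meet existing in $P$ remains a meet in $\prod_U P$; one must verify that no genuinely new lower bounds appear in the ultrapower. For a finite meet $\bw X = m$, the statement ``$m$ is a lower bound of $X$ and every lower bound of $X$ is $\leq m$'' is expressible by a single first-order $\sL$-sentence with parameters, so \L o\'s's theorem transfers its truth from $P$ to $\prod_U P$; this is exactly why the restriction $\alpha,\beta\leq\omega$ (finite meets and joins) matters, since only finitely many parameters can be named. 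I would state this preservation fact explicitly as the crux of the argument and then let the three filter-axiom verifications fall out routinely as above.
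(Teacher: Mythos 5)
Your proof is correct and follows essentially the same route as the paper's: pull $\Gamma$ back along the diagonal map and verify up-closure, closure under existing finite meets, and the prime condition for existing finite joins directly. The only difference is one of emphasis --- the paper silently uses that $[\overline{p\wedge q}]=[\bar{p}]\wedge[\bar{q}]$ in $\prod_U P$, whereas you isolate and justify this preservation fact via \L o\'s's theorem (correctly noting it is where finiteness of the meets and joins is needed), which is exactly the step the paper leaves implicit.
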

\begin{proof}
Let $p\in P$. Then $p\in \gamma\iff [\bar{p}]\in \Gamma$. Suppose $p\in\gamma$ and let $q\geq p$. Then clearly $[\bar{q}]\geq[\bar{p}]$ and so $[\bar{q}]\in\Gamma$. This means $q\in \gamma$ by definition and so $\gamma$ is up-closed. 

Now suppose $p,q\in \gamma$ and that $p\wedge q$ exists in $P$. Then $[\bar{p}],[\bar{q}]\in \Gamma$, and so $[\bar{p\wedge q}]\in \Gamma$ which means $p\wedge q \in \gamma$. This argument generalizes to arbitrary finite meets, and a similar argument applies to joins and so $\gamma$ is an $\abf$.

\end{proof}

\begin{proposition}
The class of $(\alpha,\beta)$-representable posets is closed under ultraroots.
\end{proposition}
\begin{proof}
Suppose $\prod_U P$ is $(\alpha,\beta)$-representable and let $p,q\in P$ with $p\not\leq q$. Then there is an $\abf$ $\Gamma$ of $\prod_U P$ with $[\bar{p}]\in\Gamma$ and $[\bar{q}]\not\in \Gamma$. So by lemma \ref{L;filterFromPower} $\gamma$ is an $\abf$ of $P$ with $p\in \gamma$ and $q\not\in \gamma$ and thus $P$ is $(\alpha,\beta)$-representable.
\end{proof}

\begin{theorem}
If $\alpha,\beta\leq \omega$ then the class of $(\alpha,\beta)$-representable posets is elementary.
\end{theorem}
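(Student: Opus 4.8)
The plan is to invoke Theorem \ref{T;elemUltra}, which reduces the task of showing that a class is elementary to verifying three closure conditions: closure under isomorphic copies, under ultraproducts, and under ultraroots. Two of these — closure under ultraproducts and under ultraroots — are exactly the two preceding propositions, so the substantive work is already done and what remains is to record closure under isomorphism and then assemble the pieces.

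For closure under isomorphic copies I would argue in a sentence: if $h\colon P\to\wp(X)$ is an $(\alpha,\beta)$-representation and $g\colon Q\to P$ is a poset isomorphism, then $h\circ g\colon Q\to\wp(X)$ is an $(\alpha,\beta)$-representation of $Q$, since $g$ preserves and reflects all existing meets, joins, and bounds, so that the finite meets and joins which $h$ already sends to intersections and unions continue to be sent there after precomposition with $g$. This is routine and carries no difficulty.

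The restriction $\alpha,\beta\leq\omega$ is where the only genuine subtlety lies, but it is really the reason the two propositions succeed rather than an obstacle to the present assembly. The ultraproduct and ultraroot arguments, via Lemmas \ref{L;filters} and \ref{L;filterFromPower}, transfer membership in $\abf$s across the ultraproduct construction, and this works precisely because when $\alpha,\beta\leq\omega$ the closure conditions defining an $\abf$ involve only \emph{finite} meets and joins; finitary operations are exactly what the ultrafilter combinatorics (and, implicitly, \L o\'s's theorem) respect. For infinite $\alpha$ or $\beta$ the filter-membership conditions become infinitary and the transfer breaks down, consistent with the stated caveat that the results need not hold for larger cardinals. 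Consequently, having collected closure under isomorphic copies (immediate), under ultraproducts (first proposition), and under ultraroots (second proposition), the conclusion follows by a direct citation of Theorem \ref{T;elemUltra}. I therefore expect this final theorem to admit an essentially one-line proof, with all the mathematical content front-loaded into the characterization Theorem \ref{T;rep} and the two lemmas underpinning the ultraproduct and ultraroot propositions.
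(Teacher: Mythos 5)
Your proposal is correct and is essentially identical to the paper's proof: the paper likewise observes that closure under isomorphism is trivial, cites the two preceding propositions for closure under ultraproducts and ultraroots, and concludes by Theorem \ref{T;elemcond}. Your additional remarks on why the hypothesis $\alpha,\beta\leq\omega$ makes the filter-transfer arguments finitary are accurate commentary on the supporting lemmas, not a deviation in method.
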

\begin{proof}
We've shown the class is closed under ultraproducts and ultraroots, and closure under isomorphism is trivial, so theorem \ref{T;elemcond} applies.
\end{proof}

Following the suggestion of the referee we use example \ref{E;notN} to show that the class of representable posets is not finitely axiomatizable.

\begin{theorem}
If $\alpha,\beta\leq \omega$ and either $\alpha=\omega$ or $\beta=\omega$ then the class of $(\alpha,\beta)$-representable posets is not finitely axiomatizable.
\end{theorem}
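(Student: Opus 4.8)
The plan is to use the standard ultraproduct test for non-finite-axiomatizability. If the class $\mathcal{C}$ of $(\alpha,\omega)$-representable posets were finitely axiomatizable, it would be cut out by a single sentence $\phi$, so its complement would be axiomatized by $\neg\phi$ and hence, by \L o\'s's theorem (theorem \ref{T;Los}), closed under ultraproducts. It therefore suffices to produce a family of posets, none of which is $(\alpha,\omega)$-representable, admitting an ultraproduct that \emph{is} $(\alpha,\omega)$-representable. I would take the posets $P_n$ of example \ref{E;notN}, which are $(\alpha,n-1)$-representable but not $(\alpha,n)$-representable for every $\alpha>2$. Since $n$ is finite, $(\alpha,\omega)$-representability implies $(\alpha,n)$-representability, so no $P_n$ lies in $\mathcal{C}$, giving the required family of non-members; fixing a non-principal ultrafilter $U$ on an infinite index set $\{n:n\geq n_0\}$ then sets up the contradiction, provided $\prod_U P_n\in\mathcal{C}$.

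To verify the latter I would use the filter characterization of theorem \ref{T;rep}, building a separating family of $(\alpha,\omega)$-filters of $\prod_U P_n$ exactly along the lines of lemma \ref{L;filters}. Given $[a]\not\leq[b]$, the set $u=\{n:a(n)\not\leq b(n)\}$ lies in $U$, and for each $n\in u$ the $(\alpha,n-1)$-representability of $P_n$ supplies, via theorem \ref{T;rep}, an $(\alpha,n-1)$-filter $\gamma_n$ with $a(n)\in\gamma_n$ and $b(n)\notin\gamma_n$ (setting $\gamma_n=\emptyset$ for $n\notin u$). I would then form $\Gamma=\{[x]:\{n:x(n)\in\gamma_n\}\in U\}$ and check it is an $(\alpha,\omega)$-filter containing $[a]$ but not $[b]$. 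Up-closure and preservation of meets of size $<\alpha$ follow verbatim from lemma \ref{L;filters}, since each $\gamma_n$ already preserves meets of size $<\alpha$ and these are finite.

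The main obstacle is showing that $\Gamma$ is join-prime for \emph{all} finite joins, even though each $\gamma_n$ is prime only for joins of fewer than $n-1$ elements; this is precisely where the non-principality of $U$ is essential. Suppose $\bv_{j\leq k}[x_j]=[J]\in\Gamma$ is a join of some fixed finite size $k$. By theorem \ref{T;Los} the set $v=\{n:\bv_{j\leq k}x_j(n)=J(n)\}$ lies in $U$, the cofinite set $\{n:n-1>k\}$ lies in $U$, and $w=\{n:J(n)\in\gamma_n\}\in U$. On the $U$-set $u\cap v\cap w\cap\{n:n-1>k\}$ the join $\bv_{j\leq k}x_j(n)=J(n)\in\gamma_n$ has size $<n-1$, so the join-primeness of the $(\alpha,n-1)$-filter $\gamma_n$ forces $x_j(n)\in\gamma_n$ for some $j\leq k$. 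The sets $V_j=\{n:x_j(n)\in\gamma_n\}$, $j\leq k$, thus cover a $U$-set, and since $U$ is an ultrafilter closed under finite unions, some $V_j\in U$, giving $[x_j]\in\Gamma$. Hence $\Gamma$ is an $(\alpha,\omega)$-filter, the $(\alpha,\omega)$-filters separate $\prod_U P_n$, and so it is $(\alpha,\omega)$-representable, contradicting closure of the complement of $\mathcal{C}$ under ultraproducts. Finally, the case $\alpha=\omega$ follows by order duality: since complementation in a field of sets interchanges unions and intersections, $P$ is $(\alpha,\beta)$-representable iff $P^\delta$ is $(\beta,\alpha)$-representable, so applying the same argument to the duals $P_n^\delta$ settles it.
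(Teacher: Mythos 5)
Your proof is correct, and its outer shell is the paper's: reduce finite axiomatizability to closure of the complement class under ultraproducts, take the posets $P_n$ of example \ref{E;notN} together with a non-principal ultrafilter $U$, and handle $\alpha=\omega$ by order duality. Where you genuinely diverge is in verifying that $\prod_U P_n$ is $(\alpha,\omega)$-representable. The paper does this with one application of \L o\'s's theorem to \emph{sentences}: for each $k$ the statement ``no non-trivial suprema of size $\leq k$ exist'' is first order, $P_n$ satisfies it whenever $k\leq n-2$, so the ultraproduct satisfies it for every $k$; representability is then immediate from theorem \ref{T;rep}, since the principal up-sets are $(\alpha,\omega)$-filters when all existing finite joins are trivial. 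You instead prove a strengthening of lemma \ref{L;filters}: patching together separating $(\alpha,n-1)$-filters $\gamma_n$ of the factors --- filters of unboundedly growing degree along $U$ --- yields an $(\alpha,\omega)$-filter $\Gamma$ of the ultraproduct, the crux being that any fixed join size $k$ satisfies $k<n-1$ on a cofinite (hence $U$-large) set of coordinates, after which the ultrafilter property extracts a single $j$ with $[x_j]\in\Gamma$. Both verifications are sound. Two remarks: first, your appeals to \L o\'s (to know that lubs and glbs in $\prod_U P_n$ are computed coordinatewise on $U$-large sets) require the version for formulas with parameters, which goes slightly beyond the sentence-only statement of theorem \ref{T;Los}; this is standard, and the paper's own lemma \ref{L;filters} tacitly relies on the same fact. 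Second, the trade-off: your route proves a more general transfer principle --- an ultraproduct of posets whose representability degree tends to infinity along the ultrafilter is $(\alpha,\omega)$-representable, with no appeal to the special join-free structure of the $P_n$ --- while the paper's route buys brevity, collapsing the whole verification to the observation that the ultraproduct has no non-trivial finite joins at all.
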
 
\begin{proof}
Assume $\beta=\omega$ and note that a class is basic elementary if and only if both it and its complement are elementary. For each $4\leq n < \omega$ let $P_n$ be a poset as in example \ref{E;notN}, let $U$ be a non-principal ultrafilter of $\wp(\omega)$ and consider $\prod_U P_n$. For $2\leq k<\omega$ let $\phi_k$ be a first order sentence equivalent to there being no non-trivial suprema of size less than or equal to $|k|$ defined in $P$. Then $P_n\models \phi_k$ whenever $k\leq n-2$. So $\prod_U P_n \models \phi_k$ for all $k$, and hence is representable by taking principal up-sets. So the complement of the class of $(\alpha,\omega)$-representable posets is not closed under ultraproducts and is therefore not elementary by theorem \ref{T;elemcond}. Thus the class of $(\alpha,\omega)$-representable posets is not finitely axiomatizable. The argument for $(\omega,\beta)$ is dual. 
\end{proof}

Note that it is not known whether the classes of $(\alpha,\beta)$-representable posets are finitely axiomatizable when $2<\alpha,\beta<\omega$. This is equivalent to their complement classes being closed under ultraproducts. 
\end{section}

\begin{section}{Complete representations for posets}\label{S;comp}
In section \ref{S;elem} we proved indirectly that when $\alpha,\beta\leq\omega$ the representation classes for $(\alpha,\beta)$ are elementary. The proof used does not generalize to the cases when one or both of $\alpha$ and $\beta$ are greater than $\omega$. Indeed, given that first order logic is not expressive enough to distinguish between the cardinalities of infinite sets it should not be surprising that we cannot say very much about the situations where $\alpha$ and/or $\beta$ are larger than $\omega$.

Note that for some classes of posets complete representability \emph{can} be axiomatized in first order logic. Boolean algebras, for example, are completely representable if and only if they are atomic \cite[theorem 2.21]{HirHod02}, \cite[corollary 1]{Abi71}. Since all Boolean algebras are representable, any non-atomic Boolean algebra serves as an example of a poset that is representable but not completely representable. 

Theorem \ref{T;notElem} below, which also appears as \cite[theorem 3.2]{EgrHir12}, shows that the extra structure of Boolean algebras is necessary here as the class of completely representable posets (distributive lattices to be exact) is not closed under elementary equivalence and hence cannot be elementary. It is conjectured in \cite{EgrHir12} that the classes of $(\omega,C)$ and $(C,\omega)$-representable distributive lattices are not elementary. We conjecture here that the classes of $(\alpha,C)$ and $(C,\beta)$-representable posets are not elementary for all $\alpha,\beta\geq 3$.  

\begin{theorem}\label{T;notElem}
The class of completely representable posets is not closed under elementary equivalence.
\end{theorem}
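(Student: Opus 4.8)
The plan is to exhibit two elementarily equivalent bounded distributive lattices, one completely representable and one not. The witnesses I would use are the two bounded dense chains $\mathbb{Q}\cap[0,1]$ and $[0,1]$ (the real interval), regarded as lattices with $\wedge=\min$ and $\vee=\max$; these are chains, hence distributive lattices, matching the parenthetical claim. Both are dense linear orders with distinct least and greatest elements, and the first-order theory of dense linear orders with endpoints is complete, so the two are elementarily equivalent as ordered sets. Since in a chain the lattice operations are term-definable from $\leq$ (and $\leq$ from the operations), this equivalence transfers to the lattice signature. It therefore suffices to prove that $\mathbb{Q}\cap[0,1]$ is completely representable while $[0,1]$ is not.

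For both verifications I would invoke the $(C,C)$-filter version of theorem~\ref{T;rep} (guaranteed by the remark following it): a poset is completely representable if and only if its $(C,C)$-filters separate points, and for a chain $p\not\leq q$ simply means $p>q$. Because every up-set of a chain is a ray, the whole question reduces to deciding which cuts yield $(C,C)$-filters.

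For $\mathbb{Q}\cap[0,1]$, given rationals $q<p$ I would pick an irrational $\theta\in(q,p)$ and consider $F_\theta=\{x\in\mathbb{Q}\cap[0,1]:x>\theta\}$. The checks are: $F_\theta$ is up-closed; any existing infimum of a subset of $F_\theta$ is weakly above $\theta$, hence (being rational, so $\neq\theta$) strictly above it, giving the meet-closure condition; and if an existing join $\bigvee Y$ lies in $F_\theta$ while every $y\leq\theta$, then the least rational upper bound of $Y$ would be $\leq\theta$ or fail to exist, contradicting $\bigvee Y>\theta$, giving the join condition. Since $p\in F_\theta$ and $q\notin F_\theta$, points are separated, so $\mathbb{Q}\cap[0,1]$ is completely representable. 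The feature being exploited is that $\theta$ is a genuine Dedekind gap, realized on neither side. For $[0,1]$, by contrast, every up-set is $[r,1]$ or $(r,1]$ for some real $r$: a ray $[r,1]$ with $r>0$ fails the join condition since $r=\bigvee[0,r)$ yet no element of $[0,r)$ lies in it, while $(r,1]$ with $r<1$ fails the meet condition since $\bigwedge(r,1]=r\notin(r,1]$. Hence the only $(C,C)$-filters are $\emptyset$ and the whole chain, neither separating (say) $1$ from $0$, so $[0,1]$ is not completely representable.

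I expect the main obstacles to be twofold. First, the clean justification of elementary equivalence in the lattice (rather than merely order) signature, which I would handle by the definability remark above together with completeness of the theory of dense linear orders with two endpoints. Second, and more delicate, the negative direction for $[0,1]$: one must rule out \emph{all} up-sets at once rather than a single candidate filter, and the proof hinges conceptually on the contrast between a cut realized from one side (which necessarily breaks exactly one of the meet- or join-closure conditions) and an unrealized gap (which breaks neither). It is precisely Dedekind completeness of $\mathbb{R}$ that destroys every useful filter, whereas the abundance of gaps in $\mathbb{Q}$ supplies them.
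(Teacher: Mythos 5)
Your proposal is correct and takes essentially the same route as the paper: the paper uses exactly the same witnesses $L=[0,1]\subseteq\mathbb{R}$ and $L'=[0,1]\cap\mathbb{Q}$, observes that $L$ has no non-trivial $C$-filters while for each irrational $r$ the set $\{a\in L'\colon a>r\}$ is a $C$-filter separating points of $L'$, and gets elementary equivalence from that of $\mathbb{R}$ and $\mathbb{Q}$. Your write-up simply makes explicit the verifications the paper leaves implicit (the enumeration of up-sets of $[0,1]$, the filter axioms for the irrational cuts, and completeness of the theory of dense linear orders with endpoints to transfer equivalence to the lattice signature).
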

\begin{proof}
The lattice $L=[0,1]\subseteq \mathbb{R}$ is not completely representable (it contains no non-trivial $C$-filters), however the lattice $L'=[0,1]\cap \mathbb{Q}$ \emph{is} completely representable as for every irrational $r$ the set $\{ a\in L'\colon a> r\}$ is a $C$-filter. $L$ and $L'$ are elementarily equivalent as $\mathbb{R}$ and $\mathbb{Q}$ are.
\end{proof} 

The situation seems somewhat negative at this point, but we can say something positive about the various classes of completely representable posets provided we relax our criteria a little. We use arguments adapted from those for distributive lattices given in \cite[section 3]{EgrHir12}.    

\begin{definition}[Pseudoelementary class]\label{D;pseudoClass}
A class $\sC$ of $\sL$-structures is pseudoelementary if there is a language $\sL'$ with $\sL\subseteq \sL'$, and an $\sL'$-theory $\tau$ such that $\sC$ is the class of all $\sL$-reducts of the class of all models of $\tau$.
\end{definition}

\begin{proposition}\label{P;pseudoelem}
A class $\sC$ of $\sL$-structures is pseudoelementary if and only if there are
\begin{enumerate}
\item a two-sorted language $\sL^+$, with disjoint sorts $\bbA$ and $\bbS$, containing $\bbA$-sorted copies of all symbols of $\sL$, and
\item an $\sL^+$ theory $\tau$  
\end{enumerate}
with $\sC=\{\gA^{\bbA}\upharpoonright_{\sL}\colon \gA\models \tau \}$, where $\gA$ is an $\sL^+$-structure, $\gA^{\bbA}$ is the structure in the sublanguage of $\sL^+$ containing only $\bbA$-sorted symbols whose domain contains the $\bbA$-sorted elements of $\gA$, and $\gA^{\bbA}\upharpoonright_{\sL}$ is the $\sL$ reduct of $\gA^{\bbA}$ obtained by identifying the symbols of $\sL$ with their $\bbA$-sorted counterparts in $\sL^+$.    
\end{proposition}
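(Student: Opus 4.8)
The plan is to prove the two implications separately, treating the two-sorted presentation as a normal form for the general notion in Definition \ref{D;pseudoClass}. Throughout I will use that $\sL$ is realised on the single sort $\bbA$, via the identification of each symbol of $\sL$ with its $\bbA$-sorted copy in $\sL^+$, so that a formula of $\sL$ can only speak about $\bbA$-sorted elements.

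First I would dispatch the implication that the existence of the two-sorted data yields pseudoelementarity, which is essentially bookkeeping. Given $\sL^+$ and the $\sL^+$-theory $\tau$ supplied by (1) and (2), I would simply take $\sL^+$ itself in the role of the expanded language $\sL'$ of Definition \ref{D;pseudoClass}, together with $\tau$. The point to verify is that for any $\sL^+$-structure $\gA$ the $\sL$-reduct coincides with $\gA^{\bbA}\upharpoonright_{\sL}$: since $\sL$ mentions only the sort $\bbA$, its reduct retains exactly the $\bbA$-sorted universe and the $\bbA$-sorted interpretations of the $\sL$-symbols, and the $\bbS$-sorted elements and all auxiliary symbols are forgotten. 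Hence $\{\gA^{\bbA}\upharpoonright_{\sL}:\gA\models\tau\}$ is literally the class of $\sL$-reducts of the models of $\tau$, so $\sC$ is pseudoelementary.

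For the converse, assume $\sC$ is pseudoelementary, witnessed by $\sL\subseteq\sL'$ and an $\sL'$-theory $\tau'$. I would manufacture the two-sorted language $\sL^+$ by designating the sort $\bbA$ to carry the sort(s) of $\sL'$ on which $\sL$ lives, and a second sort $\bbS$ into which I collapse every remaining sort of $\sL'$ and all its auxiliary elements, tagging the merged sorts by unary predicates if several were present. Each symbol of $\sL'$ is then re-declared as an $\sL^+$-symbol, with the symbols of $\sL$ becoming their $\bbA$-sorted copies and the others becoming $\bbS$-sorted or cross-sorted as appropriate; if $\sL'$ contributes no genuine auxiliary sort I make $\bbS$ a dummy sort kept nonempty by an explicit axiom. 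I would obtain $\tau$ by relativising the quantifiers of $\tau'$ to the sort corresponding to the original sort of each variable, and then confirm that this sets up a correspondence between models of $\tau'$ and models of $\tau$ under which $\gM\upharpoonright_{\sL}=\gA^{\bbA}\upharpoonright_{\sL}$, yielding $\sC=\{\gA^{\bbA}\upharpoonright_{\sL}:\gA\models\tau\}$.

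The step I expect to be the main obstacle is the faithful re-sorting in this converse direction, concretely the handling of function symbols. Since first-order structures require \emph{total} functions, a function symbol of $\sL'$ that, after collapsing sorts, would take arguments or values in different blocks, or would only be partially defined, cannot be transcribed verbatim. The remedy I would adopt is the standard one: replace each such function symbol by a relation symbol for its graph, adjoining to $\tau$ axioms asserting that this relation is single-valued and total on the intended sort; relation symbols of $\sL'$ are treated by relativising so that they hold only of correctly sorted tuples. Once all functions are encoded relationally the translation of terms and formulas is mechanical, and checking that the resulting model correspondence preserves the $\sL$-reduct reduces to a routine induction on the structure of $\sL'$-formulas.
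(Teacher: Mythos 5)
Your proposal is necessarily a different route from the paper's, because the paper does not prove Proposition \ref{P;pseudoelem} at all: it is quoted as following from \cite[Proposition 9.9]{HirHod02} together with the discussion opening \cite[Chapter 9.4.3]{HirHod02}. Your converse direction (pseudoelementary $\Rightarrow$ two-sorted data) is the standard normalization argument that this citation stands in for: keep the sort carrying $\sL$ as $\bbA$, merge all remaining sorts into $\bbS$ with tagging predicates, relativize the quantifiers of the given theory, and replace function symbols by their graphs together with totality and single-valuedness axioms. As a sketch this is correct, and you rightly identify the total-function issue and the dummy-sort case as the points needing care.

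The soft spot is the direction you dismiss as bookkeeping. Taking $\sL'=\sL^+$ is legitimate only if the ``language $\sL'$'' of Definition \ref{D;pseudoClass} is permitted to be two-sorted \emph{and} the ``$\sL$-reduct'' of a two-sorted structure is understood to discard the $\bbS$-sorted elements. That is a convention about reducts across a change of sort structure, not something one can verify; your sentence ``its reduct retains exactly the $\bbA$-sorted universe'' is exactly where it gets assumed. Under the literal reading of Definition \ref{D;pseudoClass} --- $\sL'$ an ordinary one-sorted language, so that reducts preserve the universe --- this implication is the one with genuine content: the honest conversion of a two-sorted model into a one-sorted structure (disjoint union of the two sorts plus a relativization predicate) produces models whose $\sL$-reducts still contain the $\bbS$-elements as junk, so the reduct class comes out wrong; and the alternative of absorbing the $\bbS$-part into the $\bbA$-universe via an injection breaks down whenever the second sort is forced to be strictly larger than the first. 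This is not a hypothetical worry in the present paper: the axioms $S_n$ of $T^+$ force the $\bbS$-part of any model to contain a name for every small subset of the poset, so for a finite representable poset the second sort is necessarily bigger than the first. Since the paper invokes the proposition in precisely this direction (Lemma \ref{L;pseudo} together with Proposition \ref{P;pseudoelem} yields Theorem \ref{T;pospseud}), the distinction matters; it is what the appeal to \cite{HirHod02} is doing work for. Your argument is fine if one grants the many-sorted reading of Definition \ref{D;pseudoClass}, but you should state explicitly that this is a reading of the definition rather than something your first paragraph proves.
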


Proposition \ref{P;pseudoelem} follows from \cite[Proposition 9.9]{HirHod02} and the discussion at the start of \cite[Chapter 9.4.3]{HirHod02}. We are now in a position to prove the main results of this section.

\begin{theorem}\label{T;pospseud}
Let $\beta\leq \omega$. Then the class of $(C, \beta)$-representable posets is pseudoelementary.
\end{theorem}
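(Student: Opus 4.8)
The plan is to apply Proposition \ref{P;pseudoelem} directly, by constructing a two-sorted language $\sL^+$ and a theory $\tau$ whose models code up exactly the $(C,\beta)$-representations. I would take $\sL=\{\leq\}$ to be the language of posets, and let $\sL^+$ have an $\bbA$-sort carrying $\leq$ (the poset itself) and an $\bbS$-sort (the base set $X$ of the representation), together with a single cross-sort relation symbol $\epsilon$, where $s\,\epsilon\,a$ is read as ``the point $s$ lies in $h(a)$''. In any model one then sets $h(a)=\{s\in\bbS:s\,\epsilon\,a\}$, so $h\colon\bbA\to\wp(\bbS)$, and the theory $\tau$ must say precisely that $h$ is a $(C,\beta)$-representation in the sense of Definition \ref{D;rep}. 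The class of $(C,\beta)$-representable posets will then be exactly $\{\gA^{\bbA}\upharpoonright_{\sL}:\gA\models\tau\}$, which is pseudoelementary by Proposition \ref{P;pseudoelem}.

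Most of the required conditions are routinely first order. I would include axioms making $\bbA$ a poset, together with the single biconditional $\forall a\,\forall b\,(a\leq b\leftrightarrow\forall s\,(s\,\epsilon\,a\to s\,\epsilon\,b))$: its forward direction makes each point-set $U_s=\{a:s\,\epsilon\,a\}$ up-closed, while its backward direction is exactly separation, so together they force $h$ to be an order embedding (injectivity coming from antisymmetry). Since $\beta\leq\omega$, every join of size $<\beta$ is finite, so join preservation can be handled by a first-order schema with one sentence per arity $n<\beta$: if $c$ is the least upper bound of $a_1,\dots,a_n$ (a first-order condition on $\bbA$) then $\forall s\,(s\,\epsilon\,c\to\bigvee_{i}s\,\epsilon\,a_i)$; the reverse containment $\bigcup_i h(a_i)\subseteq h(c)$ is free from up-closure. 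Taking $n=0$ handles a bottom element ($h(\bot)=\emptyset$), and the top is dealt with automatically by the empty meet, as explained below.

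The one genuine obstacle is complete meet preservation: I must force $h(\bw X)=\bigcap_{x\in X}h(x)$ for \emph{every} existing meet, including infinite $X$, yet first-order logic cannot quantify over the subset $X$. The key observation that dissolves this is that the ``worst case'' witnessing set is definable. One checks that there is some $X\subseteq U_s$ with $\bw X=c$ if and only if $c$ is the greatest lower bound of the definable set $\{a\in U_s:a\geq c\}$ (if $\bw X=c$ with $X\subseteq U_s$ then $X\subseteq\{a\in U_s:a\geq c\}$, and since $c$ is a lower bound of this larger set while every lower bound of $X$ is $\leq c$, a short argument shows $c$ is its glb too). As $h(\bw X)\subseteq\bigcap_{x}h(x)$ is automatic from order preservation, meet preservation becomes equivalent to the first-order axiom
\[
\forall s\,\forall c\,\Big( \Big[ \forall d\,\big( \big( \forall a\,( (s\,\epsilon\,a \wedge c \leq a) \rightarrow d \leq a ) \big) \rightarrow d \leq c \big) \Big] \rightarrow s\,\epsilon\,c \Big),
\]
whose bracketed antecedent asserts exactly that $c$ is the greatest lower bound of $\{a\in U_s:a\geq c\}$, and whose conclusion then demands $c\in U_s$. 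When $c$ is a top element this set is empty, its glb is $c$ vacuously, and the axiom forces $s\,\epsilon\,c$ for all $s$, i.e.\ $h(\top)=\bbS$; this is why the top requires no separate clause.

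It remains to verify the two directions. If $P$ is $(C,\beta)$-representable via $h_0\colon P\to\wp(Y)$, then the structure with $\bbA=P$, $\bbS=Y$ and $s\,\epsilon\,a\iff s\in h_0(a)$ satisfies $\tau$ (the meet axiom holds precisely because $h_0$ preserves all meets), so $P$ lies in the reduct class. Conversely, any $\gA\models\tau$ yields $h(a)=\{s:s\,\epsilon\,a\}$, which the axioms show to be an order embedding preserving all meets and all joins of size $<\beta$ and sending top and bottom correctly, hence a $(C,\beta)$-representation of $\gA^{\bbA}\upharpoonright_{\sL}$. Thus the two classes coincide and pseudoelementarity follows. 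The only edge case to mention is that a trivial poset may force $\bbS$ to be empty, which remains consistent with representability and does not affect the argument.
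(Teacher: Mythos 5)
Your proof is correct, but it follows a genuinely different route from the paper's. The paper's second sort consists of \emph{subsets} of the poset (with $\in$ and a smallness predicate $\fin$), and its theory expresses the filter-separation characterization of theorem \ref{T;rep}: axiom $T^+_1$ demands separation by up-sets satisfying the predicates $\bfC$ and $\boA$, while $T^+_2$, $T^+_3$ and the $S_n$ axioms force the set sort to be rich enough that these predicates entail genuine $(C,\beta)$-filterhood; the hard direction of lemma \ref{L;pseudo} is then a case analysis on whether $x=\bw\{y:y>x\}$. You instead make the second sort the \emph{points} of the representation base and axiomatize $h$ directly, bypassing theorem \ref{T;rep} entirely. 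Both arguments face the same obstacle --- first-order logic cannot quantify over the arbitrary, possibly infinite, subsets whose meets must be preserved --- and both resolve it by the same underlying idea of trading the arbitrary witness set for a canonical definable superset with the same greatest lower bound: the paper uses $\{y\in A:y>x\}\cap S$, which must itself be coded as an $\bbS$-sorted element (hence the need for $T^+_2$ and $T^+_3$), while you use $\{a\in U_s:a\geq c\}$ inside a single first-order axiom quantifying only over $\bbA$-elements and points, so your second sort needs no closure structure at all. Your key equivalence (there is $X\subseteq U_s$ with $\bw X=c$ if and only if $c$ is the greatest lower bound of $U_s\cap c^\uparrow$) is correct, and your edge cases --- the top via the empty antecedent set, the bottom via the nullary join axiom, and the forced-empty $\bbS$-sort for the one-element poset --- are all handled soundly, the last being harmless in the formalism of proposition \ref{P;pseudoelem} since the underlying domain remains nonempty. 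The trade-off: your theory is leaner (a two-relation language, no $\fin$, no auxiliary set-sort axioms) and the verification cleaner; the paper's set-sorted machinery is what it reuses, swapping $\boA$ for $\bar{\textbf{C}}$ and the $S_n$ for $T^+_4$, to prove its final theorem on completely representable posets --- though your meet axiom dualizes equally well and would deliver that result too, likewise with a finite axiomatization.
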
  

To prove this we proceed as follows (note that the case where $\beta=2$ is trivial so we assume $2<\beta$). Define $\sL$ to be the language of posets in first order logic, so $\sL=\{\leq\}$, and define $\sL^+$ to be the two-sorted language $\{\leq,\in,\fin\}$, with sorts $\bbA$ and $\bbS$, and $\leq$ and $\in$ being binary and $\fin$ being unary. In $\sL^+$ the first argument of $\in$ is $\bbA$ sorted and the second is $\bbS$ sorted, while $\leq$ takes both its arguments from $\bbA$, and $\fin$ takes its argument from $\bbS$. Define additional predicates as follows:
\begin{itemize}
\item $\subseteq(\esS,\etS)\iff\forall\eaA((\eaA\in\esS)\rightarrow(\eaA\in\etS))$

\item $\lb(\eaA,\esS)\iff\Big(\forall \ebA\big((\ebA\in \esS)\rightarrow (\eaA\leq \ebA)\big)\wedge \forall\ecA\big( \forall\edA((\edA\in\esS)\rightarrow (\ecA\leq\edA))\rightarrow (\ecA\leq \eaA)\big)\Big)$

\item $\ub(\eaA,\esS)\iff\Big(\forall \ebA\big((\ebA\in \esS)\rightarrow (\ebA\leq \eaA)\big)\wedge \forall\ecA \big(\forall \edA((\edA\in\esS)\rightarrow (\edA\leq\ecA))\rightarrow (\eaA\leq \ecA)\big)\Big)$
\item $\boU(\esS)\iff\forall\eaA\ebA\Big(\big((\eaA\in\esS)\wedge(\eaA\leq\ebA)\big)\rightarrow(\ebA\in\esS)\Big)$

\item $\bfC(\esS)\iff\forall\etS\eaA\Big(\big((\etS\subseteq\esS)\wedge \lb(\eaA,\etS) \big)\rightarrow (\eaA\in\esS)\Big)$
\item $\boA(\esS)\iff \forall\etS\eaA\Big(\big(\fin(\etS)\wedge\ub(\eaA,\etS)\wedge(\eaA\in\esS)\big)\rightarrow \exists \ebA\big((\ebA\in\etS)\wedge(\ebA\in\esS)\big)\Big)$
\end{itemize}
So $\subseteq$ corresponds to set inclusion, $\lb$ and $\ub$ to the concepts of greatest lower bound and least upper bound respectively, $\ub$ corresponds to the idea of a non-empty up-closed set, and $\bfC$ and $\boA$ will define certain completeness and primality properties respectively. We aim to define a theory so that we can capture the concept of a $(C,\beta)$-filter using these predicates.

Let $T$ be the $\sL$ theory defining partially ordered sets and define an $\sL^+$ extension $T^+$ of $T$ by adding the following axioms:
\begin{enumerate}
\item[$T^+_1$:] $\forall \eaA\ebA\Big((\eaA\not\leq\ebA)\rightarrow \exists\esS\big(\boU(\esS) \wedge \bfC(\esS)\wedge \boA(\esS)\wedge (\eaA\in\esS)\wedge(\ebA\notin\esS)\big ) \Big)$
\item[$T^+_2$:] $\forall\eaA\exists\esS\forall\ebA\Big((\eaA<\ebA)\leftrightarrow (\ebA\in\esS) \Big)$

Note that here $<$ is defined using $\leq$ and means strictly less than.

\item[$T^+_3$:] $\forall\esS\etS\exists\euS\forall\eaA\Big(\big((\eaA\in\esS)\wedge(\eaA\in\etS)\big)\leftrightarrow (\eaA\in\euS)  \Big)$

And also adding for each $2\leq n<\beta$ the axiom defined by

\item[$S_n$:] $\forall a_1^{\bbA}...\forall a_n^{\bbA}\exists \esS\big(\bw_{i=1}^n(a_i^{\bbA}\in\esS)\wedge \fin(\esS)\wedge \forall \ebA(\ebA\in\esS\rightarrow \bv_{i=1}^n(a_i^{\bbA}=\ebA)) \big)$.
\end{enumerate}
The first of these axioms forces its models to be separated by up-sets with the limited completeness and primality properties defined by $\bfC$ and $\boA$. The set $\{S_n:n<\beta\}$ together demand that the $\fin$ predicate `sees' all the sets it's supposed to, i.e. whenever an appropriately sized set of $\bbA$ sorted elements exists it defines a corresponding $\bbS$ sorted object. The role of $T^+_2$ and $T^+_3$ is to ensure that in models of $T^+$ sufficient $\bbS$ sorted elements are present to guarantee separation by $(C,\beta)$-filters. This is made precise in the following lemma.

\begin{lemma}\label{L;pseudo}
The class $\{M^{\bbA}\upharpoonright_{\sL}\colon M\models T^+ \}$ of $\sL$-reducts of models of $T^+$ is precisely the class of $(C,\beta)$-representable posets.
\end{lemma}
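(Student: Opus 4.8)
The statement is a double inclusion between the class of $\sL$-reducts of models of $T^+$ and the class of $(C,\beta)$-representable posets, so the plan is to prove each containment separately. The key conceptual point, which I would establish as a preliminary observation, is that the predicate defined by $\boU(\esS)\wedge\bfC(\esS)\wedge\boA(\esS)$ is an exact first-order surrogate for ``$\esS$ is a $(C,\beta)$-filter''. Indeed $\boU$ says $\esS$ is up-closed, $\bfC$ says $\esS$ is closed under greatest lower bounds of \emph{arbitrary} subsets coded by $\bbS$-sorted elements (this gives the $C$ on the meet side), and $\boA$ says that whenever a \emph{$\fin$-marked} set has a least upper bound lying in $\esS$, some element of that set already lies in $\esS$ (this gives the primality condition on joins up to size $\beta$). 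So once I know the $\bbS$-sort contains ``enough'' sets, these predicates pin down exactly the $(C,\beta)$-filters of the $\bbA$-sorted poset.

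For the inclusion $\supseteq$, I would start with an arbitrary $(C,\beta)$-representable poset $P$ and build an $\sL^+$-model $M$ of $T^+$ whose $\bbA$-reduct is $P$. The natural choice is to let the $\bbA$-sort be $P$ itself with its order, and to let the $\bbS$-sort be the full powerset $\wp(P)$, with $\in$ interpreted as genuine membership and $\fin$ interpreted as the predicate ``is finite'' (or, more precisely, ``has cardinality $<\beta$'', which when $\beta\leq\omega$ means finite; I should be slightly careful about the $\beta=\omega$ boundary case here). Taking the full powerset makes $T^+_2$, $T^+_3$, and every $S_n$ hold trivially, since all singletons, strict-down-sets, pairwise intersections, and all $\fin$-marked $n$-element sets are present as $\bbS$-elements. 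The only axiom requiring real work is $T^+_1$, and this is precisely where I invoke representability: by theorem~\ref{T;rep} (in its $(C,\beta)$ version noted after that theorem), $P$ being $(C,\beta)$-representable means its $(C,\beta)$-filters separate points, so for each $a\not\leq b$ there is a $(C,\beta)$-filter containing $a$ but not $b$, and by the preliminary observation this filter, viewed as an element of $\wp(P)$, witnesses the existential in $T^+_1$.

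For the harder inclusion $\subseteq$, I would take an arbitrary $M\models T^+$, set $P=M^{\bbA}\upharpoonright_{\sL}$, and show $P$ is $(C,\beta)$-representable by verifying via theorem~\ref{T;rep} that its $(C,\beta)$-filters separate points. Fix $a\not\leq b$ in $P$; axiom $T^+_1$ supplies an $\bbS$-element $\esS$ satisfying $\boU\wedge\bfC\wedge\boA$ with $a\in\esS$, $b\notin\esS$. The task is to show that the set $\gamma=\{p\in P : p\in\esS\}$ of $\bbA$-elements coded by $\esS$ is a genuine $(C,\beta)$-filter of $P$. Up-closure is immediate from $\boU$. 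Closure under existing arbitrary meets follows from $\bfC$ together with $T^+_2$ and $T^+_3$: these latter axioms guarantee that whenever a subset of $\gamma$ has a greatest lower bound in $P$, the relevant subset is represented by some $\bbS$-element so that $\bfC$ applies and forces the meet into $\esS$. The join/primality direction is the main obstacle, and is where $T^+_2$, $T^+_3$, and the schema $\{S_n:n<\beta\}$ must be combined carefully: given a defined join $\bv_{i=1}^n p_i\in\gamma$ with $n<\beta$, I must produce an $\bbS$-element that $\fin$-marks exactly $\{p_1,\dots,p_n\}$ (supplied by $S_n$) and has $\bv p_i$ as its least upper bound, so that $\boA$ fires and delivers some $p_i\in\esS$. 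I expect the delicate bookkeeping to be exactly this matching between \emph{syntactic} joins in the poset $P$ and the $\bbS$-coded finite sets on which $\boA$ is allowed to act, and I would take care that the least-upper-bound predicate $\ub$ computed inside $M$ genuinely coincides with the join $\bv_{i=1}^n p_i$ as computed in $P$ (which it does, since $\ub$ is defined purely from $\leq$). Once $\gamma$ is confirmed to be a $(C,\beta)$-filter separating $a$ from $b$, theorem~\ref{T;rep} finishes the argument.
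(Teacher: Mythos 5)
Your overall skeleton matches the paper's: the forward inclusion is proved via the model $(P,\wp(P),\leq,\in,small)$ together with the $(C,\beta)$ version of theorem \ref{T;rep}, and the converse by extracting from $T^+_1$ a separating family of coded up-sets and verifying that these are $(C,\beta)$-filters. Your treatment of the join/primality half is also essentially the paper's ($S_n$ supplies a $\fin$-marked code for $\{p_1,\dots,p_n\}$, the predicate $\ub$ agrees with the poset join because it is defined purely from $\leq$, and then $\boA$ fires). The genuine gap is on the meet side, in exactly the step you dispatch in one sentence: you claim that $T^+_2$ and $T^+_3$ ``guarantee that whenever a subset of $\gamma$ has a greatest lower bound in $P$, the relevant subset is represented by some $\bbS$-element.'' That is false. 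In an arbitrary model $M\models T^+$ the only $\bbS$-elements you are entitled to are those the axioms force to exist: the separating sets from $T^+_1$, the strict up-sets $\{b:b>a\}$ from $T^+_2$, binary intersections of existing codes from $T^+_3$, and sets of size $n<\beta$ from the $S_n$ schema. Since $\beta\leq\omega$, an infinite subset $T\subseteq\gamma$ with $\bw T$ defined in $P$ need not be coded by any $\bbS$-element whatsoever, so $\bfC$ cannot be applied to $T$, and your argument stalls precisely where the $C$ (arbitrary-meet) half of the filter property must be established.

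The paper's proof supplies the missing idea: one never codes $T$ itself, but replaces it by the definable set $\{y\in A:y>x\}\cap S$, where $x=\bw T$ and $S$ is the separating up-set; this set \emph{is} coded, by $T^+_2$ followed by $T^+_3$. The case analysis runs as follows. If $x\in T$ then $x\in S$ trivially. Otherwise $T\subseteq\{y:y>x\}$, and $x$ must equal $\bw(\{y:y>x\})$: if $x$ were a lower bound of $\{y:y>x\}$ but not the greatest one, some lower bound $z\not\leq x$ of $\{y:y>x\}$ would also be a lower bound of $T$, contradicting $x=\bw T$. Then, when $T\subseteq\{y:y>x\}\cap S$, every lower bound of $\{y:y>x\}\cap S$ is a lower bound of $T$, so $x=\bw(\{y:y>x\}\cap S)$, and applying $\bfC$ to the code of this set yields $x\in S$ (and in the remaining subcase $T\not\subseteq\{y:y>x\}\cap S$, one gets $x\in T$ after all, since $T\subseteq S$ and every $t\in T$ satisfies $t\geq x$). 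This substitution trick is the entire content of the hard direction; note that it also inverts your difficulty assessment --- the join side you flagged as ``the main obstacle'' is the routine half, while the meet side you passed over is where the real work, and your gap, lies.
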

\begin{proof}
If $P$ is a  $(C,\beta)$-representable poset we can use $(P,\wp(P),\leq,\in,small)$ to model $T^+$ by the generalized version of theorem \ref{T;rep} ($small$ holds of a set $X$ if and only if $|X|<\beta$). Conversely if $A= M^{\bbA}\upharpoonright_{\sL}$ for some model $M$ of $T^+$ then it is clearly a poset and by $T^+_1$ the interpretation of the $\in$ predicate can be used to naturally define a separating set $K$ of up-sets of $A$ for which conditions $\bfC$ and $\boA$ hold. Moreover, if $Z\in K$, $Y\subseteq A$ and $|Y|<\beta$, then $\boA$ together with the $S_n$ axioms imply that if $\bv Y$ exists in $A$ and $\bv Y \in Z$ then there is $y\in Y\cap Z$. So to complete the proof let $S\in K$, let $T\subseteq S$, and suppose $x=\bw T$ exists in $A$. 

If $x=\bw(\{y\in A:y>x\})$ then one of two things must occur. First we may have $T\subseteq \{y\in A:y>x\}\cap S$. In this case $x=\bw T \geq z$ for all lower bounds $z$ of $\{y\in A:y>x\}\cap S$. Moreover, $x$ \textit{is} a lower bound for $\{y\in A:y>x\}\cap S$, so $x=\bw(\{y\in A:y>x\}\cap S)$. By axioms $T^+_2$ and $T^+_3$, and the definitions of the set $K$ and the predicate $\bfC$ we must have $x\in S$. Alternatively, if $T\not\subseteq \{y\in A:y>x\}\cap S$ then $x\in T$, and thus in $x\in S$ and we are done. 

Suppose instead that $x\neq\bw(\{y\in A:y>x\})$ and $x\notin T$. Then $T\subseteq \{y\in A:y>x\}$. As $x$ is clearly a lower bound for $\{y\in A:y>x\}$ if it is not the greatest lower bound there must be $z\in A$ with $z\leq y$ for all $y>x$ but $z\not\leq x$, but then if $z$ would also be a lower bound for $T$, which would contradict the assumption that $x=\bw T$. We conclude that $S$ is a $(C,\beta)$-filter, and in light of theorem \ref{T;rep} that $A$ is a $(C,\beta)$-representable poset. 
\end{proof}

Lemma \ref{L;pseudo} along with proposition \ref{P;pseudoelem} concludes the proof of theorem \ref{T;pospseud}. A dual argument gives the following result.

\begin{theorem}
Let $\alpha\leq \omega$. Then the class of $(\alpha, C)$-representable posets is pseudoelementary.
\end{theorem}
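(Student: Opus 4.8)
The plan is to obtain this as the exact order-dual of Theorem \ref{T;pospseud}, so the first task is to pin down the duality that makes ``dual argument'' precise. I would first record the equivalence: a poset $P$ is $(\alpha,C)$-representable if and only if its order dual $P^\delta$ is $(C,\alpha)$-representable. This follows by composing any representation with set complementation: if $h\colon P\to\wp(X)$ is an $(\alpha,C)$-representation, then $h'(p)=X\setminus h(p)$ embeds $P^\delta$ into the same field of sets, and since complementation is an order-reversing bijection interchanging union and intersection, the all-join preservation of $h$ becomes all-meet preservation of $h'$ while the small-meet preservation of $h$ becomes small-join preservation of $h'$. Top and bottom are exchanged and still land on $X$ and $\emptyset$ as required, and the converse is identical, giving the equivalence.

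Given this, I would transport the pseudoelementary witness for the $(C,\alpha)$ class (which Theorem \ref{T;pospseud} supplies on setting $\beta=\alpha$) across duality. Concretely, let $T^+$ be the $\sL^+$-theory built in the proof of Theorem \ref{T;pospseud} for the value $\beta=\alpha$, and let $T^{+\delta}$ be obtained by replacing every $\bbA$-sorted atomic subformula $t_1\leq t_2$ throughout $T^+$ (including inside the derived predicates $\lb$, $\ub$, $\boU$, $\bfC$, $\boA$) by $t_2\leq t_1$, leaving $\in$ and $\fin$ untouched. A two-sorted structure $M$ models $T^{+\delta}$ exactly when the structure obtained by reversing its $\bbA$-order models $T^+$, and in that case $M^{\bbA}\upharpoonright_{\sL}$ is the order dual of the $(C,\alpha)$-representable poset identified by Lemma \ref{L;pseudo}. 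Hence $\{M^{\bbA}\upharpoonright_{\sL}\colon M\models T^{+\delta}\}=\{P\colon P^\delta \text{ is }(C,\alpha)\text{-representable}\}$, which by the equivalence above is precisely the class of $(\alpha,C)$-representable posets; Proposition \ref{P;pseudoelem} then yields pseudoelementarity.

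Equivalently, and matching the phrasing ``dual argument'' more literally, one can rebuild the theory from scratch with every order notion reversed: $\lb$ and $\ub$ trade places, $\boU$ now asserts down-closure, the role of $\bfC$ is played by a predicate expressing primality with respect to \emph{all} existing joins, the role of $\boA$ by one expressing closure under meets of $\fin$ sets, the family $\{S_n\colon 2\leq n<\alpha\}$ forces $\fin$ to see all small sets, and $T^+_2$ is replaced by the axiom furnishing each element's strict \emph{lower} set (with $T^+_3$, being about set intersection, retained). The verification that the $\sL$-reducts of the models are exactly the $(\alpha,C)$-representable posets is then the order-dual of Lemma \ref{L;pseudo}. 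The one step deserving care — the analogue of the main obstacle there — is the infinitary clause: in the present setting it is \emph{join-primality over arbitrary sets}, rather than closure under arbitrary meets, that must be extracted from the finitely many $\bbS$-sorted witnesses the theory guarantees, and I would carry this out by the case split on whether $x=\bv\{y\colon y<x\}$, mirroring the original argument's split on $x=\bw\{y\colon y>x\}$.
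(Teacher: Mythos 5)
Your proposal is correct, and its core (the first two paragraphs) is exactly the paper's route made precise: the paper's entire proof here is the sentence ``a dual argument gives the following result,'' and your complementation lemma ($P$ is $(\alpha,C)$-representable if and only if $P^\delta$ is $(C,\alpha)$-representable) combined with the syntactic reversal of $T^+$ carries that duality out cleanly, reusing Lemma \ref{L;pseudo} as a black box rather than re-proving its dual.

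One warning about your final paragraph, offered as the ``more literal'' dual argument: the predicate assignments there are crossed, and taken at face value that version fails. If $\boU$ is dualized to assert down-closure, then the separating down-sets must carry the other two \emph{literal} duals --- closure under all ($\bbS$-sorted) joins and primality for meets of $\fin$ sets --- so that their complements are the desired $(\alpha,C)$-filters. The combination you list (down-closed, join-prime with respect to all joins, closed under meets of $\fin$ sets) is satisfied by \emph{every} down-set: if $\bv Y$ lies in a down-set then every element of $Y$ does, and any existing meet of elements of a down-set lies below an element of it. A theory built on those predicates would therefore admit every poset as a reduct. Conversely, if you keep the separating sets as up-sets (which is what your phrases ``join-primality over arbitrary sets'' and ``closure under meets of $\fin$ sets'' suggest), then your proposed case split on $x=\bv\{y\colon y<x\}$ hits a snag: applying the primality predicate to the $\bbS$-sorted set $\{y\colon y<x\}$ produces a witness in $\{y\colon y<x\}\cap S$, not in $Y\cap S$, so one must instead run the closure argument on the complement $S^c$ --- and that requires an axiom making $S^c$ available as an $\bbS$-sorted element, which is precisely the extra clause the paper adds in $\bar{T}^+_1$ when it treats complete representations. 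None of this undermines your proof, since the transport argument of your first two paragraphs is self-contained; but the ``rebuild from scratch'' sketch would need either the down-set formulation with the predicates swapped, or the up-set formulation augmented with a complement axiom.
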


Finally, by replacing the predicate $\boA$ with 
\begin{equation*}
\bar{\textbf{C}}(\esS)\iff\forall\etS\eaA\Big(\big((\etS\subseteq\esS)\wedge \ub(\eaA,\etS) \big)\rightarrow (\eaA\in\esS)\Big)
\end{equation*} 
and proceeding with a similar argument replacing $T^+_1$ with
\begin{align*}
\bar{T}^+_1: \forall \eaA\ebA\Big(&(\eaA\not\leq\ebA)\rightarrow \exists\esS\big(\boU(\esS) \wedge \bfC(\esS)\\
&\wedge (\eaA\in\esS)\wedge(\ebA\notin\esS)\\
& \wedge \exists\etS(\forall\ecA(\ecA\notin\esS\leftrightarrow \ecA\in \etS)\wedge\bar{\textbf{C}}(\etS))\big)\Big)
\end{align*}
and adding the single axiom
\begin{equation*}
T^+_4: \forall\eaA\exists\esS\forall\ebA\Big((\eaA>\ebA)\leftrightarrow (\ebA\in\esS) \Big)
\end{equation*}

in place of the $S_n$ axioms we can prove our final result.

\begin{theorem}
The class of completely representable posets is pseudoelementary.
\end{theorem}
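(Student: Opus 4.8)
The plan is to reuse the machinery built for Theorem \ref{T;pospseud} almost verbatim, with the predicate $\boA$ and the axiom family $\{S_n\}$ replaced by the completeness device $\bar{\bfC}$ together with $\bar{T}^+_1$ and $T^+_4$ already introduced above. By Proposition \ref{P;pseudoelem} it suffices to exhibit a two-sorted language $\sL^+$ (here we keep the sorts $\bbA,\bbS$ but drop $\fin$, so $\sL^+=\{\leq,\in\}$) and an $\sL^+$-theory $\bar{T}^+$, consisting of $T$ together with $\bar{T}^+_1$, $T^+_2$, $T^+_3$ and $T^+_4$, so that $\{M^{\bbA}\upharpoonright_\sL\colon M\models\bar{T}^+\}$ is exactly the class of completely representable posets. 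By the $(C,C)$-version of Theorem \ref{T;rep} a poset is completely representable precisely when its $(C,C)$-filters are separating, so everything reduces to proving the analogue of Lemma \ref{L;pseudo} for $\bar{T}^+$. The conceptual point that makes the infinite case go through is that, unlike primality over finite joins, complete join-primality of an up-set $S$ is equivalent to its complement being closed under all existing joins: if $\bv Y$ is defined and lies in $S$ while no $y\in Y$ does, then all of $Y$ lies in the complement, and closure of the complement under joins would force $\bv Y$ into the complement, a contradiction. This is exactly what $\bar{\bfC}$ asserts of the complement, so no cardinality-indexed family of axioms is needed and both $\fin$ and the $S_n$ can be discarded.

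For the forward inclusion I would take a completely representable poset $P$ and model $\bar{T}^+$ on $(P,\wp(P),\leq,\in)$. The axioms $T^+_2$, $T^+_3$ and $T^+_4$ hold because $\wp(P)$ contains every strict up-set, every strict down-set, and every pairwise intersection. For $\bar{T}^+_1$, given $a\not\leq b$ choose a $(C,C)$-filter $S$ separating them; then $\boU(S)$ and $\bfC(S)$ hold since $S$ is up-closed and closed under existing meets, and by Corollary \ref{C;FI} the complement $T=P\setminus S$ is a $(C,C)$-ideal, hence closed under all existing joins, which is exactly $\bar{\bfC}(T)$. So $P$ models $\bar{T}^+$.

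The substance lies in the converse, and this is where I expect the main obstacle. Given $M\models\bar{T}^+$ with reduct $A$, axiom $\bar{T}^+_1$ supplies a separating family $K$ of up-sets, each $S\in K$ satisfying $\bfC(S)$ and having complement satisfying $\bar{\bfC}$; I must show each such $S$ is genuinely a $(C,C)$-filter and then invoke Theorem \ref{T;rep}. The delicacy is that $\bfC$ and $\bar{\bfC}$ only enforce closure under greatest lower bounds and least upper bounds of those subsets actually witnessed by $\bbS$-sorted elements of $M$, whereas an arbitrary meet $\bw T$ or join need not be presented to the model as such a witnessed bound. This is precisely the gap that $T^+_2$ and $T^+_3$ were introduced to close in Lemma \ref{L;pseudo}: using the $\bbS$-sorted strict up-set of $\bw T$ (from $T^+_2$) together with closure of $\bbS$ under intersection ($T^+_3$), one runs the case analysis on whether $\bw T$ equals $\bw\{y\colon y>\bw T\}$ to conclude $\bw T\in S$, so $S$ is closed under all existing meets. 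For complete representability I now need the symmetric statement on the join side, and this is exactly the purpose of adding $T^+_4$ (the dual of $T^+_2$, supplying strict down-sets): the mirror-image argument, run against $\bar{\bfC}$, shows that the complement of $S$ is closed under \emph{all} existing joins rather than merely $\bbS$-witnessed ones, whence $S$ is completely join-prime by the equivalence noted above. Assembling these, each $S\in K$ is a $(C,C)$-filter, $K$ witnesses that the $(C,C)$-filters of $A$ separate, and Theorem \ref{T;rep} gives complete representability. Combining the two inclusions with Proposition \ref{P;pseudoelem} completes the proof.
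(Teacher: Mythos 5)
Your proposal is correct and follows essentially the same route as the paper: the same modified theory ($T$ together with $\bar{T}^+_1$, $T^+_2$, $T^+_3$, $T^+_4$, with $\fin$ and the $S_n$ discarded), the same use of $T^+_2$/$T^+_4$ with $T^+_3$ to upgrade $\bfC$ and $\bar{\textbf{C}}$ from $\bbS$-witnessed bounds to genuine meet- and join-completeness, and the same appeal to Corollary \ref{C;FI} and the $(C,C)$-version of Theorem \ref{T;rep}. You have simply written out in detail the verification that the paper compresses into the remark that the argument is ``essentially the same'' as that of Theorem \ref{T;pospseud}.
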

\begin{proof}
The argument is essentially the same as that for theorem \ref{T;pospseud}. The main difference is that the axiom $\bar{T}^+_1$ demands that the poset elements be separated by a set of upsets that are complete with respect to the $\bfC$ predicate and whose complements are complete with respect to the $\bar{\textbf{C}}$ predicate. The axiom $T^+_4$ plays a similar role to $T^+_2$ in the original proof and along with $T^+_3$ ensures that $\bar{\textbf{C}}$ translates to actual join-completeness in any model. The result of this is that in any model the poset elements are separated by a set of meet-complete up-closed sets whose complements are  join-complete. By corollary \ref{C;FI} this is equivalent to them being $C$-filters so we are done.
\end{proof}

Note that this argument does not require the $S_n$ axioms so the axiomatization is finite, unlike the situation where $\alpha$ or $\beta$ is $\omega$.

\end{section}


\begin{thebibliography}{10}
\providecommand{\url}[1]{#1}
\providecommand{\urlprefix}{URL }
\expandafter\ifx\csname urlstyle\endcsname\relax
  \providecommand{\doi}[1]{DOI~\discretionary{}{}{}#1}\else
  \providecommand{\doi}{DOI~\discretionary{}{}{}\begingroup
  \urlstyle{rm}\Url}\fi

\bibitem{Abi68}
Abian, A.: On definitions of cuts and completion of partially ordered sets.
\newblock Zsitschr. f. math. Logik und Grundlagen d. Math. \textbf{14},
  299--302 (1968)

\bibitem{Abi71}
Abian, A.: Boolean rings with isomorphisms preserving suprema and infima.
\newblock J. London Math. Soc. \textbf{3}, 618--620 (1971)

\bibitem{Bal69}
Balbes, R.: A representation theory for prime and implicative semilattices.
\newblock Trans. Am. Math. Soc. \textbf{136}, 261--267 (1969)

\bibitem{Chakei90}
Chang, C., Keisler, H.: Model Theory (3rd ed.).
\newblock North Holland, Amsterdam, NL (1990)

\bibitem{CheKem92}
Cheng, Y., Kemp, P.: Representation of posets.
\newblock Zeitschr. f. math. Logik und. Grundlagen d. Math. \textbf{38},
  269--276 (1992)

\bibitem{EgrHir12}
Egrot, R., Hirsch, R.: Completely representable lattices.
\newblock Algebra Universalis \textbf{67}, 205--217 (2012).
\newblock \doi{10.1007/s00012-012-0181-4}

\bibitem{HirHod02}
Hirsch, R., Hodkinson, I.: Relation Algebras by Games.
\newblock North-Holland, Amsterdam, NL (2002)

\bibitem{HooShu82}
Hoo, C., Shum, K.: 0-distributive and p-uniform semilatices.
\newblock Canad. Math. Bull. \textbf{25}, 317--324 (1982)

\bibitem{HooShu84}
Hoo, C., Shum, K.: Weakly distributive semilattices.
\newblock Southeast Asian Bull. Math. \textbf{8}, 24--28 (1984)

\bibitem{Kea97}
Kearnes, K.: The Class of Prime Semilattices is Not Finitely Axiomatizable.
\newblock Semigroup Forum \textbf{55}, 133--134 (1997)

\bibitem{PawTha80}
Pawar, S., Thakare, N.: On prime semilattices.
\newblock Canad. Math. Bull. \textbf{23}, 291--298 (1980)

\bibitem{Sch72}
Schein, B.: On the definition of distributive semilattices.
\newblock Algebra Universalis \textbf{2}, 1--2 (1972)

\bibitem{She71}
Shelah, S.: Every two elementarily equivalent models have isomorphic
  ultrapowers.
\newblock Israel J. Math. \textbf{10}, 224--233 (1971)

\bibitem{SCLS85}
Shum, K., Chan, M., Lai, C., So, K.: Characterizations for prime semilattices.
\newblock Canad J. Math \textbf{37}, 1059--1073 (1985)

\bibitem{Stone37}
Stone, M.: Applications of the theory of boolean rings to general topology.
\newblock Trans. Amer. Math. Soc. \textbf{41}, 375–481 (1937)

\end{thebibliography}
\end{document}